\def\blfootnote{\xdef\@thefnmark{}\@footnotetext}
\newtheorem{thm}{Theorem}[section]
\newtheorem{cor}[thm]{Corollary}
\newtheorem{lem}[thm]{Lemma}
\newtheorem{prop}[thm]{Proposition}
\newtheorem{conj}[thm]{Conjecture}
\theoremstyle{definition}
\theoremstyle{remark}
\newtheorem{rem}[thm]{Remark}
\def\im{{\rm im}}
\def\dim{{\rm dim}}
\def\Z{\mathbb Z}
\def\C{\mathbb{C}}
\def\R{\mathbb{R}}
\def\N{\mathbb{N}}
\def\nrk{{\rm nrk}}
\newcommand{\G }{\Gamma (G, X\sqcup \mathcal H)}
\newcommand{\e }{\varepsilon }
\renewcommand{\kappa }{\varkappa}
\newcommand{\Hl }{\{ H_\lambda \mid \lambda \in \Lambda \} }
\newcommand{\lo }{\ell_\Omega }
\renewcommand{\ll }{\left\langle\hspace{-.7mm}\left\langle }
\newcommand{\rr }{\right\rangle\hspace{-.7mm}\right\rangle }
\renewcommand{\d }{{\rm d} }
\newcommand{\Lab }{{\bf Lab} }
\begin{document}

\onehalfspace

\title{Normal generation and $\ell^2$-Betti numbers of groups}
\author{Denis Osin\thanks{The research of the first author was supported by the NSF grant DMS-1006345 and by the RFBR grant 11-01-00945.} \ and Andreas Thom}
\date{}
\maketitle

\begin{abstract}
The \emph{normal rank} of a group is the minimal number of elements whose normal closure coincides with the group. We study the relation between the normal rank of a group and its first $\ell^2$-Betti number and conjecture the inequality $\beta_1^{(2)}(G) \leq \nrk(G)-1$ for torsion free groups.
The conjecture is proved for limits of left-orderable amenable groups.
On the other hand, for every $n\ge 2$ and every $\e>0$, we give an example of a simple group $Q$ (with torsion) such that $\beta_1^{(2)}(Q) \geq n-1-\varepsilon$. These groups also provide examples of simple groups of rank exactly $n$ for every $n\ge 2$; existence of such examples for $n> 3$ was unknown until now.
\end{abstract}

\section{Introduction}

Let $G$ be a countable group. By now, the first $\ell^2$-Betti number $\beta^{(2)}_1(G)$, a numerical invariant ranging in the interval $[0,\infty]$, has become a standard tool in the study of the properties of $G$. The study of this invariant was started by Atiyah \cite{atiyah}, extended and generalized by Cheeger-Gromov \cite{cheegro}, and further developed in important work of Bekka-Valette, L\"uck, and Gaboriau \cite{bv, gaboriau, MR1926649}. See \cite{MR1926649} for the necessary background and all relevant definitions.

It is well-known that for every finitely generated group $G$, we have
\begin{equation} \label{triv}
\beta_1^{(2)}(G) \leq \d(G)-1,
\end{equation}
where $\d (G)$ denotes the minimal number of generators of $G$. The proof of this statement is essentially trivial using the Morse inequality. The main goal of this note is to study the relation between the first $\ell^2$-Betti number and the normal rank of a group. Recall that a group $G$ is {\it normally generated} by a subset $X\subseteq G$ if $G$ coincides with the normal closure $\ll X \rr$ of $X$, i.e., the only normal subgroup of $G$ containing $X$ is $G$ itself. The \emph{normal rank} of $G$, denoted $\nrk (G)$, is the minimal number of normal generators of $G$.

In view of (\ref{triv}) it seems natural to ask whether the inequality
\begin{equation}\label{naive}
\beta_1^{(2)}(G) \leq \nrk(G)-1
\end{equation}
can hold in general. However, the group $PSL(2,\Z) = \langle a,b \mid a^2 = b^3=1 \rangle$ shows that this is too optimistic. Indeed, we have $\beta_1^{(2)}(PSL(2,\Z)) = 1/6$ whereas $\nrk(PSL(2,\Z))=1$, as $PSL(2,\Z)= \ll ab \rr$. Moreover, Lennox and Wiegold proved \cite{LW} that for any two finite perfect groups $A$ and $B$, the normal rank of the free product $A\ast B$ is one. The idea behind these examples can be generalized to show that there exist hyperbolic groups of normal rank one with arbitrary large first $\ell^2$-Betti numbers (see Remark \ref{hypgroups}). Moreover, using some standard tools provided by small cancellation theory over relatively hyperbolic groups we obtain the following.

\begin{thm}\label{main1}
For every integer $n\ge 2$ and every $\e >0$ there exists an infinite simple group $Q$ generated by $n$ elements such that $\beta_1^{(2)} (Q)\ge n-1-\e $.
\end{thm}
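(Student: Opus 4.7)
The idea is to realize $Q$ as the limit of an Olshanskii-style tower of small-cancellation quotients of a hyperbolic group whose first $\ell^2$-Betti number is already almost $n-1$. The natural starting point is
$$
G_0 \;=\; A_1 \ast \cdots \ast A_n,
$$
where each $A_i$ is a finite cyclic group of large order $m$, so by the free product formula for $\ell^2$-Betti numbers
$$
\beta_1^{(2)}(G_0) \;=\; n - 1 - \sum_{i=1}^n \tfrac{1}{|A_i|} \;=\; n - 1 - \tfrac{n}{m},
$$
which exceeds $n - 1 - \varepsilon/2$ for $m$ large enough. This $G_0$ is hyperbolic and generated by $n$ elements (one per free factor).

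Next, apply Olshanskii's scheme for constructing infinite simple quotients of hyperbolic groups, in the small-cancellation framework over (relatively) hyperbolic groups developed by the first author. Build a nested chain of normal subgroups $1 \trianglelefteq N_1 \trianglelefteq N_2 \trianglelefteq \cdots$ of $G_0$ so that every $G_k := G_0/N_k$ is hyperbolic and the ascending union $N_\infty = \bigcup_k N_k$ is a maximal proper normal subgroup of $G_0$; then $Q := G_0/N_\infty$ is infinite and simple. The passage from $G_k$ to $G_{k+1}$ is effected by imposing a single relation $w_k^{M_k} = 1$, where $w_k$ is a long word in $G_k$ (not a proper power, in general position, satisfying a strong small-cancellation condition) chosen to kill a prescribed non-trivial conjugacy class; the exponent $M_k$ is taken enormous.

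The quantitative control on $\beta_1^{(2)}$ hinges on a small-cancellation analog of the Dicks-Linnell formula for one-relator groups with torsion, which in the free case gives $\beta_1^{(2)}(\langle x_1, \dots, x_d \mid s^N\rangle) = d - 1 - 1/N$ for $s$ not a proper power. The expected bound here is
$$
\beta_1^{(2)}(G_{k+1}) \;\geq\; \beta_1^{(2)}(G_k) - \tfrac{1}{M_k},
$$
whenever $w_k$ satisfies strong enough small cancellation over $G_k$. Given this, choose the exponents so that $\sum_k 1/M_k < \varepsilon/2$; then lower semicontinuity of $\beta_1^{(2)}$ along the tower (L\"uck-type approximation for ascending unions of normal subgroups) yields
$$
\beta_1^{(2)}(Q) \;\geq\; \beta_1^{(2)}(G_0) - \sum_k \tfrac{1}{M_k} \;\geq\; n - 1 - \varepsilon.
$$
As a bonus, combining this with the trivial inequality \eqref{triv} forces $d(Q) = n$, giving the rank statement mentioned in the abstract.

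\textbf{Main obstacle.} The crux is the $1/M_k$ bound per quotient. The naive Morse inequality permits a drop of up to $1$ per added relator, which is useless across infinitely many stages of Olshanskii's construction, and it is also insufficient to say only that $w_k$ is a power: the general one-relator estimate does not directly upgrade to iterated small-cancellation quotients of a non-free starting group. The required sharpening reflects the fact that a strong small cancellation relator $w_k^{M_k}$ corresponds to a 2-cell whose contribution to the $\ell^2$-chain complex of the presentation complex is of "$\ell^2$-size" $1/M_k$ rather than $1$. Making this precise appears to be the real technical heart of the proof and likely requires an adaptation of the analytic techniques behind Dicks-Linnell (and Linnell's work on the Atiyah conjecture) to the small-cancellation setting over hyperbolic groups. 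Once this quantitative lemma is established, the rest -- Olshanskii's enumeration of conjugacy classes, preservation of hyperbolicity at each stage via the first author's small-cancellation theorems, and the passage to the direct limit -- is essentially bookkeeping.
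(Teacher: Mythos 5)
Your overall architecture matches the paper's: start from $G_0=\Z/p\Z\ast\cdots\ast\Z/p\Z$ with $\beta_1^{(2)}(G_0)=n-1-n/p$, build an infinite tower of hyperbolic quotients by imposing proper-power relations with large exponents, and pass to the limit using semicontinuity of $\beta_1^{(2)}$ in the space of marked groups. But there are two genuine gaps. The first is the one you flag yourself: the estimate $\beta_1^{(2)}(G_{k+1})\ge\beta_1^{(2)}(G_k)-1/M_k$ is left as an unproved ``quantitative lemma,'' and you suggest it would need a new Dicks--Linnell-type analysis over hyperbolic groups. No such new analytic input is needed. The paper maintains, as an inductive invariant, that each $G_i$ admits an \emph{irreducible torsion presentation} $\langle X\mid R_1^{n_1},\dots,R_k^{n_k}\rangle$ (meaning the order of $R_j$ in $G_i$ is exactly $n_j$) with $\sum 1/n_j<\varepsilon$, and then quotes the Peterson--Thom bound $\beta_1^{(2)}(G)\ge |X|-1-\sum 1/n_j$ wholesale at every stage. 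What has to be checked is only that irreducibility survives each quotient, and that is exactly what the Dehn filling theorem (Theorem \ref{DF}) delivers: the new relator $h_i^{q_i}$ has order exactly $q_i$ because $H_1/N_1\cong\Z/q_i\Z$ embeds in $G_{i+1}$, and the old relators keep their orders because the quotient map is injective on the finite set of their powers.

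The second gap is more serious because you do not name it: your mechanism for simplicity does not work as stated. Imposing $w_k^{M_k}=1$ for a word $w_k$ ``in general position'' does not kill any prescribed conjugacy class; to make the limit simple you must force every generator into the normal closure of every nontrivial element, and the obvious way to do that (adding relations $x=\text{product of conjugates of }g$) introduces relators that are not proper powers and hence cost a full $1$ each in the Betti number estimate. The paper's resolution is the coprime-torsion trick, which is the real idea of the proof: given a nontrivial $g$ and a generator $x$, Proposition \ref{coset} produces a \emph{special} element $h$ in the coset $x\ll g\rr$, and one imposes $h^{q}=1$ for a prime $q>p$. Modulo $\ll g\rr$ the element $h$ becomes $x$, which then satisfies both $x^p=1$ and $x^q=1$ with $\gcd(p,q)=1$, forcing $x\in\ll g\rr$ --- while the relator added to the presentation is still a proper power of exact order $q$, costing only $1/q$. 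Without this device (or an equivalent one), your tower can be made simple or can be made to preserve $\beta_1^{(2)}$, but not both.
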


This theorem also has a purely group theoretic application to the following question asked by Wiegold: Is every finitely generated simple group generated by $2$ elements? The answer is positive for finite groups, Higman's simple groups, and many other examples. However it is negative in general. The first counterexample was constructed by Guba \cite{Gub} using geometric methods developed by Olshanskii \cite{Ols}. More precisely, Guba constructed a finitely generated simple group $S$ such that all $2$-generated subgroups of $S$ are free. In particular, $S$ is not $2$ generated. Guba showed that $S$ can be generated by $460000$ elements, but the exact number of generators was not computed in \cite{Gub}. Later Obraztsov \cite{Obr} showed that one can ensure that $\d(S) = 3$ in Guba's construction.

Dealing with $2$-generated subgroups is essential in Guba's approach and it seems that his method fails to produce finitely generated simple groups which are not $n$-generated already for $n = 3$. Thus the question of whether there exists a simple group $S$ with $\d(S) = n$ was open for $n>3$. Theorem \ref{main1} together with (\ref{triv}) immediately implies the following.

\begin{cor}
For every positive integer $n$ there exists a simple group $Q$ with $\d(Q)=n$.
\end{cor}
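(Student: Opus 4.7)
The plan is to deduce the corollary directly by pairing Theorem \ref{main1} with the trivial inequality (\ref{triv}); essentially no work remains beyond choosing $\varepsilon$ small enough and handling the small $n$ case by hand.

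For $n=1$ I would just exhibit a cyclic group of prime order, say $\Z/2\Z$, which is simple and of minimal generating number exactly one. The case $n\ge 2$ is the substantive one, and here I would fix $\varepsilon \in (0,1)$ (taking $\varepsilon = 1/2$ is convenient) and apply Theorem \ref{main1} to produce an infinite simple group $Q$ that is generated by $n$ elements and satisfies $\beta_1^{(2)}(Q) \ge n - 1 - \varepsilon$. From (\ref{triv}) we have $\d(Q) \ge \beta_1^{(2)}(Q) + 1 \ge n - \varepsilon > n-1$. Since $\d(Q)$ is an integer, this forces $\d(Q)\ge n$; on the other hand, the hypothesis from Theorem \ref{main1} that $Q$ is generated by $n$ elements gives $\d(Q)\le n$, so $\d(Q) = n$.

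Since the corollary is explicitly advertised as an ``immediate'' consequence, there is no real obstacle to overcome; the only subtle point is to observe that (\ref{triv}) refines the naive bound $\d(Q)\le n$ into an equality once we have a lower bound on $\beta_1^{(2)}(Q)$ that is strictly greater than $n-2$, which is precisely what the freedom to choose $\varepsilon<1$ in Theorem \ref{main1} provides. All of the genuine difficulty is absorbed into Theorem \ref{main1} itself, whose proof relies on small cancellation theory over relatively hyperbolic groups; the corollary merely repackages that theorem in the language of ordinary rank.
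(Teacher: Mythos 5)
Your proposal is correct and follows exactly the paper's intended argument: apply Theorem \ref{main1} with $\e<1$, use inequality (\ref{triv}) to force $\d(Q)\ge n$, and combine with the $n$-generation from the theorem to get equality, with the trivial case $n=1$ handled by a cyclic group of prime order. Nothing further is needed.
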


As we already mentioned, the proof of Theorem \ref{main1} explores coprime torsion. To the best of our knowledge, such a torsion is the only source of counterexamples to the inequality (\ref{triv}). Thus we put forward the following conjecture, which is obviously a stronger version of (\ref{triv}).

\begin{conj} \label{conj1}
Let $G$ be a torsion free discrete group. Then $\beta_1^{(2)}(G) \leq \nrk(G)-1.$
\end{conj}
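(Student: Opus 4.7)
The plan is to attack this through the cohomological description $\beta_1^{(2)}(G)=\dim_{\mathcal{N}(G)}H^1(G,\ell^2 G)$ and to bound $H^1$ via the values of $1$-cocycles on a normal generating set. Fix normal generators $g_1,\ldots,g_n\in G$ with $n=\nrk(G)$ and consider the $\mathcal{N}(G)$-module map
\[
\mathrm{ev}\colon Z^1(G,\ell^2 G)\to (\ell^2 G)^n,\qquad c\mapsto (c(g_1),\ldots,c(g_n)).
\]
Since $G$ is infinite, the coboundary module $B^1=\{g\mapsto gv-v : v\in\ell^2 G\}$ has $\mathcal{N}(G)$-dimension one. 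Combining the short exact sequences $0\to B^1\to Z^1\to H^1\to 0$ and $0\to\ker(\mathrm{ev})\to Z^1\to\mathrm{im}(\mathrm{ev})\to 0$ with additivity of $\dim_{\mathcal{N}(G)}$ yields
\[
\beta_1^{(2)}(G)=\dim Z^1-1\le n-1+\dim_{\mathcal{N}(G)}\ker(\mathrm{ev}),
\]
so the conjecture reduces to the assertion that $\dim_{\mathcal{N}(G)}\ker(\mathrm{ev})=0$ whenever $G$ is torsion-free.

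The key step is to establish this vanishing. Any $c\in\ker(\mathrm{ev})$ satisfies
\[
c(hg_ih^{-1})=(1-hg_ih^{-1})\,c(h),
\]
and, iterating the cocycle identity along a product of conjugates of the $g_i^{\pm 1}$, one expresses $c$ at an arbitrary element of $G$ as an $\mathcal{N}(G)$-linear combination of values $c(h_j)$ at conjugators, each prefixed by an operator of the form $1-u$ with $u$ a nontrivial element of $G$. Since $g_1,\ldots,g_n$ normally generate $G$, such a representation exists for every element. For torsion-free $G$ one expects every such operator $1-u$ with $u\ne 1$ to be injective on $\ell^2 G$, and I would try to organise these relations into a rigidity argument forcing $c$ to vanish in $\mathcal{N}(G)$-dimension, for instance by induction on the minimal length of a representation as a product of conjugates of the $g_i$.

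The principal obstacle is precisely that injectivity of $1-u$ on $\ell^2 G$ for $u\ne 1$ is the content of the Kaplansky zero-divisor conjecture, a consequence of the strong Atiyah conjecture and open for general torsion-free groups. When $u$ has finite order $k$, the operator $1-u$ has a nonzero kernel (namely the range of $1+u+\cdots+u^{k-1}$), and $\ker(\mathrm{ev})$ acquires positive $\mathcal{N}(G)$-dimension; this is exactly what makes room for the torsion-laden counterexamples in Theorem~\ref{main1}. A complete proof of the conjecture therefore seems to require either invoking the Atiyah conjecture---which would settle the problem in classes where it is known, including the left-orderable amenable limits treated by the authors via local indicability---or a genuinely new dimension-theoretic argument that bypasses pointwise injectivity and controls $\dim_{\mathcal{N}(G)}\ker(\mathrm{ev})$ directly from the torsion-free hypothesis.
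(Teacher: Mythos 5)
You are attempting to prove Conjecture \ref{conj1}, which the paper states as an \emph{open conjecture} and does not prove; there is no proof in the paper to compare against, and your writeup honestly stops short of claiming one. Still, it is worth naming the gaps precisely. The reduction in your first display is fine: $\mathrm{ev}$ is an $\mathcal N(G)$-module map, so additivity of the dimension gives $\beta_1^{(2)}(G)\le \nrk(G)-1+\dim_{\mathcal N(G)}\ker(\mathrm{ev})$, and the conjecture would follow from $\dim_{\mathcal N(G)}\ker(\mathrm{ev})=0$. But two genuine obstacles remain. First, as you say, the injectivity of $1-u$ on $\ell^2 G$ for $u\ne 1$ is the analytic zero-divisor problem, open for general torsion-free groups; so the argument cannot close unconditionally. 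Second, and this is a gap even \emph{modulo} the Atiyah conjecture: a $1$-cocycle is determined by its values on a generating set, not on a normal generating set, so $\ker(\mathrm{ev})$ is genuinely large as a set, and the identity $c(hg_ih^{-1})=(1-hg_ih^{-1})c(h)$ only expresses the value at a conjugate in terms of the value at the conjugator $h$, which is an arbitrary element of $G$. There is no well-founded induction here: writing $w$ as a product of conjugates of the $g_i^{\pm1}$ expresses $c(w)$ in terms of $c$ at conjugators that are in no controlled sense ``smaller'' than $w$, so the proposed ``rigidity argument forcing $c$ to vanish in dimension'' is not set up and it is not clear it can be. Injectivity of each individual operator $1-u$ does not by itself bound the dimension of a space of cocycles constrained by infinitely many such relations.

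It is instructive to compare with what the paper actually proves, namely Theorem \ref{main2} for limits of left-orderable amenable groups. There the authors do not evaluate cocycles on a normal generating set at all. They realize $Z^1(G,\ell^2G)$ as $\ker(A^*)$ inside $\ell^2G^{\oplus n}$ via evaluation on an honest generating set, append the Fox derivatives of $k=\beta_1(G)$ elements $g_1,\dots,g_k$ chosen so that the augmented \emph{integral} matrix $\epsilon(B)$ is injective, and then transfer injectivity of $\epsilon(B')$ to injectivity of $B'$ over $\Z[G_i]$ using local indicability (Morris) and conservativity (Gersten), pass to $\ell^2(G_i)$ by Elek's theorem for amenable groups, and finally to $G$ itself by a sofic approximation lemma. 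This proves the stronger inequality $\beta_1^{(2)}(G)\le\beta_1(G)-1$, from which the normal-rank bound follows trivially since $\beta_1(G)\le\nrk(G)$. The moral is that the provable cases go through ordinary rank-one quotients, precisely sidestepping the propagation problem your approach runs into; the paper's Section 6 explains why any unconditional proof of Conjecture \ref{conj1} would have to be substantially deeper (it would imply the Kervaire conjecture for torsion-free groups, among other things).
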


If Conjecture \ref{conj1} held, it would have immediate applications to some outstanding problems (see the discussion in the last section of our paper). Our next result provides an evidence towards the conjecture.

\begin{thm} \label{main2}
Let $G$ be a finitely generated group. If $G$ is a limit of left orderable amenable groups in the space of marked group presentations, then
\begin{equation*} \label{maineq}
\beta_{1}^{(2)}(G) \leq \beta_1(G)-1 \leq \nrk(G)-1.
\end{equation*}
\end{thm}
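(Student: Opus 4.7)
The plan is to split the chain of inequalities into two pieces, the second of which is elementary.

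For the rightmost inequality $\beta_1(G) - 1 \le \nrk(G) - 1$, I would argue directly: if $x_1,\dots,x_n$ normally generates $G$, then since abelianization factors through the quotient by the commutator subgroup, the images of $x_1,\dots,x_n$ already generate $G^{\mathrm{ab}}$. Hence the torsion-free rank of $G^{\mathrm{ab}}$ is at most $n$, so $\beta_1(G) \le n = \nrk(G)$.

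For the main inequality $\beta_1^{(2)}(G) \le \beta_1(G) - 1$, the strategy is to lift it from the approximating sequence. Fix a sequence $A_n \to G$ of left-orderable amenable groups converging to $G$ in the space of marked presentations. Two classical facts apply to each $A_n$. First, by a theorem of Morris, every finitely generated left-orderable amenable group is locally indicable; in particular, every non-trivial $A_n$ surjects onto $\Z$, so $\beta_1(A_n) \ge 1$. Second, by Cheeger--Gromov, every infinite amenable group has vanishing first $\ell^2$-Betti number, so $\beta_1^{(2)}(A_n) = 0$. Together these give $\beta_1^{(2)}(A_n) \le \beta_1(A_n) - 1$ at each level $n$, which is precisely the inequality one wants, at the approximating level.

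The main obstacle is the passage to the limit: one must transfer this inequality from the sequence $\{A_n\}$ to $G$. Neither $\beta_1$ nor $\beta_1^{(2)}$ is continuous on the space of marked groups --- for instance, $\beta_1^{(2)}$ jumps from $0$ on each torsion-free nilpotent approximation of a free group $F_k$ to $k-1$ on $F_k$ itself --- so a direct continuity argument is hopeless. The plan is instead to apply a semi-continuity principle designed for amenable approximations: one expects that whenever $A_n \to G$ with all $A_n$ amenable, the excess $\beta_1(G) - \beta_1^{(2)}(G)$ is bounded below by $\liminf_n (\beta_1(A_n) - \beta_1^{(2)}(A_n)) = \liminf_n \beta_1(A_n) \ge 1$. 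A concrete route is to fix a presentation $G = \langle x_1, \dots, x_k \mid r_1, r_2, \dots \rangle$, exploit that every finite subset of the $r_j$ eventually holds as a relation in $A_n$, and then compare the Fox-calculus description of $\beta_1^{(2)}(G)$ as a von Neumann dimension of an $L(G)$-module with the vanishing $\beta_1^{(2)}(A_n) = 0$ coming from amenability, in order to extract the required bound in terms of the rank of the exponent-sum matrix of the relations, which is precisely $k - \beta_1(G)$.
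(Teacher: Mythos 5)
Your argument for the right-hand inequality $\beta_1(G)-1\le \nrk(G)-1$ is correct and is exactly the paper's (the paper simply calls it obvious). The problem is the left-hand inequality. The semi-continuity principle you propose to invoke --- that $\beta_1(G)-\beta_1^{(2)}(G)\ge \liminf_n\bigl(\beta_1(A_n)-\beta_1^{(2)}(A_n)\bigr)$ whenever $A_n\to G$ with $A_n$ amenable --- is false, and your own example refutes it: the free group $F_k$ is the limit of its torsion-free nilpotent quotients $A_n$ (which are left-orderable and amenable), and for these $\beta_1(A_n)-\beta_1^{(2)}(A_n)=k-0=k$, while $\beta_1(F_k)-\beta_1^{(2)}(F_k)=k-(k-1)=1<k$ for $k\ge 2$. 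So the ``excess'' can strictly decrease in the limit, and no argument that only uses the numerical facts $\beta_1^{(2)}(A_n)=0$ and $\beta_1(A_n)\ge 1$ at the approximating level can work; indeed the Cheeger--Gromov vanishing of $\beta_1^{(2)}(A_n)$ plays no role in the paper's proof.

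What the paper actually transfers through the limit is not a Betti number but the injectivity of one fixed matrix. Using the Peterson--Thom identity $\beta_1^{(2)}(G)+1=\dim_G Z^1(G,\ell^2G)$ for infinite $G$, one forms a matrix $B'$ over $\Z[G]$ by augmenting a finite portion of the adjoint of the relation matrix (enough rows to present $G_{ab}$) with the Fox derivatives of $k=\beta_1(G)$ elements $g_1,\dots,g_k$ chosen so that the augmented integer matrix $\epsilon(B')$ is injective; injectivity of $B'$ on $\ell^2G^{\oplus n}$ then bounds $\dim_G Z^1(G,\ell^2 G)$ by $k$. The three inputs you are missing are: (i) Morris's theorem is used not to get $\beta_1(A_n)\ge 1$ but to get local indicability, which by Gersten's theorem makes the $A_n$ \emph{conservative}, so that injectivity of $\epsilon(B')$ over $\Z$ forces injectivity of the lifted matrix $B'_n$ over $\Z[A_n]$; (ii) Elek's theorem (the analytic zero-divisor conjecture for amenable groups) upgrades this to injectivity of $B'_n$ on $\ell^2(A_n)^{\oplus n}$; (iii) a L\"uck-approximation lemma for sofic groups, proved via weak convergence of spectral measures together with a uniform logarithmic estimate of $\mu_{A_i}([0,\varepsilon))$ near $0$, shows that $\dim_G\ker B'=\lim_n\dim_{A_n}\ker B'_n=0$. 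Your sketch gestures at Fox calculus and the exponent-sum matrix, which is the right starting point, but without steps (i)--(iii) there is no bridge from the integer-level injectivity to the $\ell^2$-level statement for $G$.
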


It is worth noting that, in general, one cannot expect that $\beta_1^{(2)}(G) \leq \beta_1(G) -1$ even among left orderable groups, see Remark \ref{rem}.

Theorem \ref{main2} has an application to the approximation of the first $\ell^2$-Betti number by its finite dimensional analogues. Let $G$ be a finitely generated residually finite group and let $\{ N_i\}_{i\in \mathbb N}$ be a sequence of finite index normal subgroups of $G$ such that
\begin{equation}\label{N}
N_1\ge N_2 \ge \ldots ,\;\;\; {\rm and}\;\;\; \bigcap\limits_{i=1}^\infty N_i=\{ 1\} .
\end{equation}
The L\"uck's Approximation Theorem \cite{Lueck94} implies that if $G$ is finitely presented, then
\begin{equation}\label{approx}
\beta_1^{(2)} (G)= \lim \limits_{i\to \infty} \frac{\beta_1 (N_i)}{[G:N_i]}.
\end{equation}
where $\beta_1^{(2)} (G)$ is the first $\ell^2$-Betti number of $G$. In particular the limit in (\ref{approx}) exists and is independent of the choice of the sequence of finite index normal subgroups satisfying (\ref{N}). Moreover, in \cite{LO} L\"uck and the first author proved that the inequality
\begin{equation}\label{ineq}
\beta_1^{(2)} (G)\ge \lim \limits_{i\to \infty}\sup \frac{\beta_1 (N_i)}{[G:N_i]}.
\end{equation}
holds even if $G$ is not finitely presented. On the other hand, for infinitely presented groups the inequality in (\ref{ineq}) can be strict; indeed there exist groups for which the left side of (\ref{ineq}) is positive, while the right side equals zero \cite{LO}.

Using Theorem \ref{main2}, the inequality (\ref{ineq}), and multiplicativity of the first $\ell^2$-Betti number, we immediately obtain the following.
\begin{cor}
Let $G$ be a finitely generated group. If $G$ is a limit of left orderable amenable groups in the space of marked group presentations, then (\ref{approx}) holds for every sequence of finite index normal subgroups satisfying (\ref{N}).
\end{cor}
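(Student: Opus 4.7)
The plan is to apply Theorem \ref{main2} to each member $N_i$ of the chain and then combine the resulting bounds with the multiplicativity of $\beta_1^{(2)}$ and with the general inequality (\ref{ineq}) from \cite{LO}. Without loss of generality assume $G$ is infinite (the finite case being trivial).

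First I would verify that each $N_i$ is itself a finitely generated limit of left orderable amenable groups in the space of marked presentations. Since $G/N_i$ is a finite group and $G$ is a marked limit of left orderable amenable groups $G_k$, for all sufficiently large $k$ the group $G_k$ admits a quotient isomorphic to $G/N_i$; let $N_{k,i}$ denote the kernel. Then $N_{k,i}$ has finite index in $G_k$, hence is finitely generated, left orderable (as a subgroup of $G_k$), and amenable (as a subgroup of an amenable group). Marking each $N_{k,i}$ by lifts of a fixed Schreier generating set of $N_i$, one checks that $N_{k,i}\to N_i$ in the space of marked groups.

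Granting this, Theorem \ref{main2} applied to $N_i$ yields $\beta_1^{(2)}(N_i)\le\beta_1(N_i)-1$. By multiplicativity of the first $\ell^2$-Betti number under finite-index subgroups, $\beta_1^{(2)}(N_i)=[G:N_i]\cdot\beta_1^{(2)}(G)$, so dividing by $[G:N_i]$ gives
\[
\beta_1^{(2)}(G)\;\le\;\frac{\beta_1(N_i)}{[G:N_i]}-\frac{1}{[G:N_i]}.
\]
Since $\bigcap_i N_i=\{1\}$ and $G$ is infinite, $[G:N_i]\to\infty$, and passing to the limit inferior produces
\[
\beta_1^{(2)}(G)\;\le\;\liminf_{i\to\infty}\frac{\beta_1(N_i)}{[G:N_i]}.
\]
Combining this with (\ref{ineq}), which asserts $\beta_1^{(2)}(G)\ge\limsup_{i\to\infty}\beta_1(N_i)/[G:N_i]$, forces the sandwich $\limsup\le\beta_1^{(2)}(G)\le\liminf$, so the limit exists and equals $\beta_1^{(2)}(G)$. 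That is exactly (\ref{approx}).

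The main obstacle is the closure step for finite-index subgroups in the first paragraph; once that is established the remaining chain of inequalities is formal bookkeeping with multiplicativity and (\ref{ineq}).
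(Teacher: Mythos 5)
Your overall chain of inequalities is exactly the paper's intended one-line derivation: apply Theorem \ref{main2} to each $N_i$, divide by the index using multiplicativity, and sandwich against (\ref{ineq}). The gap is in the step you yourself flag as the main obstacle. The claim that for all large $k$ the approximating group $G_k$ admits a quotient isomorphic to $G/N_i$ --- compatibly with the marking, which is what your construction of $N_{k,i}$ and its Schreier marking requires --- is false. For example, let $G=\mathbb{Z}^2=\langle a,b\rangle$ and $G_k=\langle a,b\mid [a,b],\ a^2b^{-(2k+1)}\rangle\cong\mathbb{Z}$; these are left orderable amenable and converge to $(G,(a,b))$ in $\mathcal G_2$, but for $N=\mathbb{Z}\times 2\mathbb{Z}$ there is no epimorphism $G_k\to G/N\cong\mathbb{Z}/2$ sending $a\mapsto 0$ and $b\mapsto 1$, since in $G_k$ one has $a^2=b^{2k+1}$. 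An abstract surjection $G_k\to\mathbb{Z}/2$ does exist here, but its kernel has nothing to do with $N$, so it is useless for your purpose. Convergence in $\mathcal G_n$ only controls relations up to a bounded length and gives no control over finite quotients; this lack of control is precisely why L\"uck approximation is a theorem rather than a formality.

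The closure step is nevertheless true, by a simpler argument that uses neither normality nor finite index. If $(G_k,S_k)\to(G,S)$ and $H\le G$ is generated by $h_1,\dots,h_m$ with $h_j=w_j(S)$, set $H_k=\langle w_1(S_k),\dots,w_m(S_k)\rangle\le G_k$. A word of length $\ell$ in $h_1,\dots,h_m$ is trivial in $H$ if and only if it is trivial in $G$, and rewritten in the alphabet $S$ it has length at most $\ell\cdot\max_j\|w_j\|$; hence for all $k$ large enough (depending on $\ell$) it is trivial in $H_k$ if and only if it is trivial in $H$. Thus $(H_k)$ converges to $H$ in $\mathcal G_m$. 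Since left orderability and amenability pass to subgroups, each $N_i$ (finitely generated by Schreier's lemma) is again a limit of left orderable amenable groups, so Theorem \ref{main2} applies to it. With this repair, the rest of your argument --- multiplicativity, the estimate $\beta_1^{(2)}(G)\le\beta_1(N_i)/[G:N_i]$, and the sandwich with (\ref{ineq}) --- is correct and yields (\ref{approx}).
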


The corollary can be applied, for example, to subgroups of right angled Artin groups, which are residually torsion free nilpotent \cite{DK} and hence are limits left orderable amenable groups. Many of these subgroups are infinitely presented and thus the L\"uck theorem does not apply. Moreover, ``most" finitely generated subgroups are infinitely presented even in simplest right angled Artin groups, e.g., in direct products of free groups. Indeed by a theorem of Baumslag and Roseblade \cite{BR} a finitely presented subgroup of a direct product $F_m\times F_n$ is either free or virtually a direct product of finite index subgroups of the multiples.

\section{Preliminaries}
Given a word $W$ in an alphabet $\mathcal A$, we denote by $\| W\| $ its length. We also
write $W\equiv V$ to express the letter--for--letter equality of
words $W$ and $V$. For a group $G$ generated by a set $X\subseteq G$, we denote by $\Gamma (G,X)$ the corresponding Cayley graph. Given a (combinatorial) path $p$ in  $\Gamma (G,X)$, we denote by $\Lab (p)$ its label and by $p_-$ and $p_+$ its starting and ending point, respectively. The length of $p$ (i.e.,  the number of edges in
$p$) is denoted by $\ell (p)$. The {\it word length} $|g|_X$ of an element $g\in G$ is
defined to be the length of a shortest word in $X$
representing $g$ in $G$.

The metric space $\mathcal G_n$ of marked $n$-generated groups consists of pairs $(G,S)$, where $G$ is a group and $S$ is an ordered generating set of $G$ of cardinality $n$. Such pairs are in 1-to-1 correspondence with epimorphisms $F_n\to G$, where $F_n$ is the free group of rank $n$, and thus the set $\mathcal G_n$ can be identified with the set of all normal subgroups of $F_n$. Then the distance between $N_1, N_2\lhd F_n$ is then defined by
$$d(N_1, N_2) =
\inf \left\{2^{-k} \mid k \in \N \colon N_1 \cap B_{F_n}(k)  = N_2 \cap B_{F_n}(k)\right\},$$
where $B_{F_n}(k)$ denotes the closed ball of radius $k$ in $F_n$  with respect to a fixed basis.

It is well-known that $\mathcal G_n$ is a compact for every $n$. We say that a sequence of $n$-generated groups $(G_i)_{i\in \N}$ converges to an $n$-generated group $G$ if $(G_i, S_i)_{n\in \N}\subseteq \mathcal G_n$ converges to $(G,S)\in \mathcal G_n$ for some generating sets $S_i$ of $G_i$, $i\in \N$, and $S$ of $G$, respectively. Typical examples of convergent sequences arise from chains of normal subgroups. For instance, if $G_1\lhd G_2\lhd \cdots $ (respectively, $H_1\rhd H_2 \rhd \cdots $) is a chain of normal subgroups of $G$, then the sequence of groups
$(G/G_i)_{i \in \N}$ (respectively, $(G/H_i)_{i\in \N}$) converges to $G/\left(\bigcup_{i\in \N} G_i\right)$ (respectively, $G/\left(\bigcap_{i\in \N} H_i\right)$).

Recall that a group is hyperbolic if it admits a finite presentation with linear isoperimetric function. Similarly a group $G$ is hyperbolic relative to a collection of subgroups $\Hl $ if it admits a finite relative presentation with linear isoperimetric function. For details we refer to \cite{Gro}, \cite{Osi06a} and references therein.

Throughout the rest of this section let $G$ is be a group hyperbolic relative to a collection of subgroups $\Hl $. Let
$$
\mathcal H=\bigsqcup\limits_{\lambda\in \Lambda} (H_\lambda\setminus \{ 1\}).
$$
By the definition of relative hyperbolicity there exists a finite subset $X\subseteq G$, called a {\it relative generating set } of $G$,  such that $G$ is generated by $X\cup\mathcal H$. Let $\G $ be the Cayley graph of $G$ with respect to the generating set $X\sqcup \mathcal H$. We stress that all unions are disjoint, which means, for example, that if some element $h\in G$ belongs to $H_\lambda \setminus\{ 1\}$ and $H_\mu\setminus\{ 1\}$ for some $\mu\ne \lambda$, then there are (at least) two edges at each vertex of $\G $ corresponding to $h$: one is labelled by $h$, which we think of as an element of $H_\lambda \setminus\{ 1\}$ and the other is labelled by $h$, which we think of as an element of $H_\mu \setminus\{ 1\}$.

Let $q$ be a path in the Cayley graph $\G $. A (non--trivial)
subpath $p$ of $q$ is called an {\it $H_\lambda $-component} for some
$\lambda \in \Lambda $ if the label of $p$ is a word in the
alphabet $H_\lambda\setminus \{ 1\} $ and $p$ is maximal, i.e., is not contained in a bigger
subpath of $q$ labelled by a word in the
alphabet $H_\lambda\setminus \{ 1\} $.

Two $H_\lambda $-components $p_1, p_2$
of a path $q$ in $\G $ are called {\it connected} if there exists a
path $c$ in $\G $ that connects some vertex of $p_1$ to some vertex
of $p_2$ and $c$ is either trivial or ${\phi (c)}$ is a word consisting of letters from $
H_\lambda\setminus\{ 1\} $. In algebraic terms this means that all
vertices of $p_1$ and $p_2$ belong to the same coset $gH_\lambda $
for a certain $g\in G$. Note that we can always assume that $c$ has
length at most $1$, as every nontrivial element of $H_\lambda
\setminus\{ 1\} $ is included in the set of generators.  An
$H_\lambda $--component $p$ of a path $q$ is called {\it isolated }
in $q$ if no distinct $H_\lambda $--component of $q$ is connected
to $p$.

To every subset $\Omega$ of $G$, we can associate a length
$|\cdot |_\Omega $ on $G$ as follows. If $g\in \langle \Omega \rangle$, then $|g|_\Omega $ is the word length of $g$ with respect to
$\Omega $. Otherwise we set $|g|_\Omega =\infty $. Further for any path $p$ in
$\G $, we define its \emph{$\Omega $-length} as $\ell_\Omega (p)=|\Lab (p)|_\Omega$.

The lemma below was proved in \cite[Lemma 2.27]{Osi06a}.
\begin{lem}\label{Omega}
Let $G$ be a group that is hyperbolic relative to a collection of
subgroups $\Hl $. Then there exists a finite subset $\Omega
\subseteq G$ and a constant $L>0$ such that the following condition
holds. Let $q$ be a cycle in $\G $, $p_1, \ldots , p_k$ a set of
isolated components of $q$. Then the $\Omega $--lengths of $p_i$'s
satisfy
$$ \sum\limits_{i=1}^k l_\Omega (p_i)\le Ll(q).$$
\end{lem}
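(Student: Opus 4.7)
The plan is to prove the bound via a van Kampen diagram argument, using the linear relative isoperimetric function that comes for free from relative hyperbolicity.

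First I would fix a finite relative presentation $\langle X \mid \mathcal R\rangle$ of $G$ with respect to $\Hl$, which exists and has linear relative Dehn function by hypothesis. Define $\Omega\subseteq G$ to be the (finite) set of all elements of $\mathcal H=\bigsqcup_\lambda (H_\lambda\setminus\{1\})$ that occur as letters in at least one relator $R\in\mathcal R$. Now given a cycle $q$ in $\G$, fill it by a van Kampen diagram $\Delta$ over this presentation. The $2$-cells of $\Delta$ come in two flavours: cells whose boundary label is an $\mathcal R$-relator (call these $\mathcal R$-cells), and cells whose boundary label is a word over a single alphabet $H_\lambda\setminus\{1\}$, encoding a relation inside some $H_\lambda$. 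Linearity of the relative Dehn function yields a constant $C$ so that $\Delta$ may be chosen with at most $C\ell(q)$ many $\mathcal R$-cells.

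The heart of the argument is to examine each isolated component $p_i$ individually. By gluing together all $H_\lambda$-cells mutually connected through shared $H_\lambda$-letter edges (the natural extension of the connectivity notion from $\G$ to the diagram) and connected to $p_i$, one obtains a subdiagram $\Delta_i$ whose boundary is the concatenation of $p_i$ with a path whose edges all lie on boundaries of $\mathcal R$-cells. Isolation of $p_i$ in $q$ ensures that no other portion of $q$ participates in $\Delta_i$, so as an element of $H_\lambda$ the label of $p_i$ equals an ordered product of the $H_\lambda$-letters appearing on the $\mathcal R$-cell side of $\partial\Delta_i$; by construction each such letter lies in $\Omega$. Consequently $\lo(p_i)$ is bounded by the number of $H_\lambda$-letters from $\mathcal R$-cells bordering $\Delta_i$. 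Summing over $i$, and using that the $\Delta_i$'s are pairwise disjoint (again by isolation, since two distinct $\Delta_i$ sharing an $\mathcal R$-cell would force the corresponding $p_i$'s to be connected), each $\mathcal R$-cell contributes to at most one $\Delta_i$, yielding
\begin{equation*}
\sum_{i=1}^k \lo(p_i)\ \leq\ M\cdot \#\{\mathcal R\text{-cells of }\Delta\}\ \leq\ MC\cdot \ell(q),
\end{equation*}
where $M=\max_{R\in\mathcal R}\|R\|$. Setting $L=MC$ gives the claim.

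The main obstacle I expect is the last structural point: ensuring the subdiagrams $\Delta_i$ are well-defined and pairwise disjoint, and that $\partial\Delta_i$ really does factor $\Lab(p_i)$ as advertised. This likely requires first passing to a $\Delta$ of minimal total cell count (so that no two $H_\lambda$-cells with opposite orientations are adjacent and cancellable), and then exploiting the isolation hypothesis combined with a cut-and-paste argument in $\Delta$ to prevent two components $p_i,p_j$ from having their fused $H_\lambda$-subdiagrams touch. This bookkeeping is the nontrivial piece; once it is set up, the counting step is essentially just the linear isoperimetric inequality.
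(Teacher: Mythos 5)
The paper does not actually prove this lemma itself --- it is imported verbatim from \cite[Lemma 2.27]{Osi06a} --- and your van Kampen diagram argument (take $\Omega$ to be the set of $\mathcal H$-letters occurring in the finitely many relators, fill $q$ by a diagram with linearly many $\mathcal R$-cells, and charge each isolated component to the $\mathcal H$-letters of the $\mathcal R$-cells that its fused $H_\lambda$-subdiagram abuts) is essentially the proof given in that source. One small correction to your bookkeeping: two distinct subdiagrams $\Delta_i$, $\Delta_j$ can legitimately border the \emph{same} $\mathcal R$-cell (through different $\mathcal H$-letter edges of its boundary, or for different $\lambda$) without forcing $p_i$ and $p_j$ to be connected, so the disjointness you need is at the level of $\mathcal H$-letter occurrences on $\mathcal R$-cell boundaries rather than of whole $\mathcal R$-cells --- but the resulting bound $\sum_i \lo (p_i)\le M\cdot \#\{\mathcal R\text{-cells}\}\le MC\,\ell(q)$ survives unchanged.
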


The next three algebraic results are well known. They can be found in \cite[Theorem 1.4]{Osi06a}, \cite[Theorem 2.40]{Osi06a}, and \cite[Theorem 4.3 and Corollary 1.7]{Osi06b}, respectively. Recall that an element $g\in G$ {\it loxodromic} if it has infinite order and is not conjugate to an element of one of the subgroups $H_\lambda $. Recall also that a group is {\it elementary} if it contains a cyclic subgroup of finite index.

\begin{lem}\label{Eg}
Suppose a group $G$ is hyperbolic relative to a collection of subgroups $\Hl$. Let $g$ be a
loxodromic element of $G$. Then the following conditions hold:
\begin{enumerate}
\item[(a)] There is a unique maximal elementary
subgroup $E_G(g)\le G$ containing $g$.

\item[(b)] $E_G(g)=\{ h\in G\mid \exists\, m\in \mathbb{N}~ \mbox{such that}~ h^{-1}g^mh=g^{\pm m}\} $.

\item[(c)] The group $G$ is hyperbolic relative to the collection
$\Hl\cup \{ E_G(g)\} $.
\end{enumerate}
\end{lem}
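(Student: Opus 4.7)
The plan is to prove (a) and (b) simultaneously, then derive (c). Throughout, I would work inside the Cayley graph $\G$, which is (Gromov-)hyperbolic by the definition of relative hyperbolicity. The standing fact I will use repeatedly is that, since $g$ is loxodromic (infinite order, not conjugate into any $H_\lambda$), the bi-infinite path read from the vertex $1$ with label $\ldots g g g \ldots$ is a quasi-geodesic in $\G$ and determines two distinct fixed points of $g$ on $\partial \G$.

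For (a) and (b), let $E$ denote the set on the right-hand side of (b). A direct verification shows that $E$ is a subgroup containing $g$: if $h_i^{-1}g^{m_i}h_i = g^{\pm m_i}$ for $i=1,2$, then $(h_1h_2)^{-1}g^{m_1m_2}(h_1h_2) = g^{\pm m_1 m_2}$, and similarly for inverses. The crux is to show that $E$ is virtually cyclic. Given $h\in E$ with $h^{-1}g^m h = g^{\pm m}$, consider the closed cycle in $\G$ based at $1$ with label $h\cdot g^m\cdot h^{-1}\cdot g^{\mp m}$. By hyperbolicity this quadrilateral is thin, so the quasi-axes of $g$ and of $hgh^{-1}$ must fellow travel on a long subsegment. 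A standard coset-counting argument along the quasi-axis then shows that $\langle g\rangle$ has finite index in $E$, so $E$ is elementary. Finally, any elementary subgroup $H\le G$ containing $g$ is virtually cyclic, so every $h\in H$ satisfies $h^{-1}g^n h = g^{\pm n}$ for some $n$, whence $H\subseteq E$. This proves maximality, uniqueness, and the identification $E_G(g)=E$ at once.

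For (c), the plan is to verify the linear relative isoperimetric inequality for the new relative presentation of $G$ obtained by adjoining $E_G(g)$ as a peripheral subgroup. Starting from a word $w$ of relative length $n$ in $X\sqcup \mathcal H\sqcup (E_G(g)\setminus\{1\})$ that represents the identity, I would first expand each $E_G(g)$-letter of $w$ into a word in $X\sqcup\mathcal H$ to obtain a word $w'$ trivial in the old relative presentation. The real work is the converse direction: given a relative van Kampen diagram $\Delta$ for $w'$, one must cluster its cells into $O(n)$ ``super-cells'', each labelled by a single element of $E_G(g)$. Here Lemma \ref{Omega} is the central tool, used to bound the total $\Omega$-length of isolated $H_\lambda$-components appearing along the boundaries of tentative super-cells, guaranteeing that the clustering process terminates with a linearly bounded number of $E_G(g)$-cells.

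I expect the main obstacle to be part (c), and specifically the verification that distinct cosets of $E_G(g)$ and of each $H_\lambda$ have uniformly bounded coarse intersection, so that the enlarged collection $\Hl\cup\{E_G(g)\}$ satisfies the full relative hyperbolicity criterion (not merely the isoperimetric inequality, but also the bounded coset penetration property). This transversality requires combining the quasi-geodesicity of powers of $g$ in $\G$ with another application of Lemma \ref{Omega}: a long path lying simultaneously close to a coset of $H_\lambda$ and to a coset of $E_G(g)$ would produce a cycle with an isolated $H_\lambda$-component of large $\Omega$-length, contradicting the linear bound. This is where the hypothesis that $g$ is loxodromic (i.e.\ not conjugate into any $H_\lambda$) enters essentially.
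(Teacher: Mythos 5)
The paper itself offers no proof of this lemma: it is one of three results quoted as well known from \cite{Osi06a} and \cite{Osi06b} in the sentence immediately preceding the statement, so there is no in-paper argument to compare yours against. Judged on its own, your outline for (a) and (b) follows the standard route (show the set $E$ on the right-hand side of (b) is a subgroup, show it is elementary, show every elementary subgroup containing $g$ is contained in it — your subgroup verification and the final maximality step are fine), but it contains one genuine gap. The step ``a standard coset-counting argument along the quasi-axis then shows that $\langle g\rangle$ has finite index in $E$'' is precisely where the relative setting differs from the ordinary hyperbolic one, and the argument you invoke does not transfer as stated: the graph $\G$ need not be locally finite (each vertex carries an $\mathcal H$-edge for every nontrivial element of every infinite $H_\lambda$), so the elements of $E$ that move the basepoint to within a bounded $\dxh$-distance of the quasi-axis do not a priori fall into finitely many $\langle g\rangle$-cosets. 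To close this one must control the $H_\lambda$-components of the relevant geodesics and convert ``bounded length in $\G$'' into ``bounded length over the finite set $X\cup\Omega$'' by means of Lemma \ref{Omega}; this is exactly the mechanism the paper deploys in its proof of Lemma \ref{ahn} (the estimates on $\lo(p_i)$ against the threshold $34L$), and without it part (b) is not established.

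Part (c) is a plan rather than a proof: ``cluster the cells of $\Delta$ into $O(n)$ super-cells'' is the entire content of the assertion, and you give no mechanism for performing the clustering or for bounding the number of $E_G(g)$-cells linearly. One point in your favor: under the definition of relative hyperbolicity used in this paper (finite relative presentation with a linear relative isoperimetric function), the bounded coset penetration property is a consequence of the definition rather than a separate axiom, so it need not be verified independently — although the coarse-transversality estimate you describe (finite intersection of $E_G(g)$ with conjugates of the $H_\lambda$ and of itself, cf.\ Lemma \ref{malnorm}) is indeed an ingredient of the isoperimetric argument. Since the paper treats the lemma as a citation, the economical course is to do the same; if you want a self-contained proof, the missing ingredient throughout is the systematic replacement of local finiteness by the isolated-component estimate of Lemma \ref{Omega}.
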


\begin{lem}\label{malnorm}
Let $G$ be a group hyperbolic relative to a collection of subgroups $\Hl $. Then for every $\lambda \in \Lambda $ and $g\in G\setminus H_\lambda $, we have $|H_\lambda \cap H_\lambda ^g|<\infty $.
\end{lem}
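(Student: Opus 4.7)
The plan is to argue by contradiction using Lemma \ref{Omega} applied to a short cycle that encodes the conjugation relation. Suppose $|H_\lambda \cap H_\lambda^g| = \infty$ for some $g \in G \setminus H_\lambda$, and fix a word $W$ over $X \sqcup \mathcal{H}$ representing $g$, of length $L_g$. For each of the infinitely many $h \in H_\lambda \setminus \{1\}$ satisfying $h' := ghg^{-1} \in H_\lambda$, I would form the cycle $q_h$ in $\G$ whose label reads $W \cdot h \cdot W^{-1} \cdot (h')^{-1}$, with $h$ and $(h')^{-1}$ treated as single letters of $H_\lambda \setminus \{1\}$. This cycle has two distinguished $H_\lambda$-edges $e_1$ (from $g$ to $gh$) and $e_2$ (from $ghg^{-1}$ to $1$), and its total length is $2L_g + 2$, independent of $h$.

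The analysis splits into two cases according to whether the maximal $H_\lambda$-components $p_1 \supseteq e_1$ and $p_2 \supseteq e_2$ of $q_h$ are connected. If they are, then by the algebraic reformulation of connectedness, all their vertices lie in a common left coset $g_0 H_\lambda$; in particular $\{g, gh\} \subseteq gH_\lambda$ and $\{ghg^{-1}, 1\} \subseteq H_\lambda$ are both contained in $g_0 H_\lambda$, which forces $gH_\lambda = H_\lambda$ and hence $g \in H_\lambda$ --- contradicting the hypothesis. Otherwise, $p_1$ and $p_2$ are isolated components of $q_h$, and Lemma \ref{Omega} supplies a finite set $\Omega \subseteq G$ and a constant $L > 0$, both independent of $h$, such that
\begin{equation*}
\ell_\Omega(p_1) + \ell_\Omega(p_2) \;\leq\; L \cdot \ell(q_h) \;=\; L(2L_g + 2).
\end{equation*}
Since $\Omega$ is finite, the label of $p_1$ (which differs from $h$ only by fixed data from $W$ that is independent of $h$) is confined to finitely many values, contradicting the infinitude of the supply of admissible $h$.

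The main technical point to watch is the bookkeeping in the isolated case: to identify the label of $p_1$ with $h$ itself it is cleanest to arrange that $p_1 = e_1$ and $p_2 = e_2$ are single edges. Provided $X \neq \emptyset$, this is secured by padding $W$ at each end with a factor $xx^{-1}$ for some $x \in X$, so that $W$ begins and ends with non-$H_\lambda$ letters; the length $L_g$ increases only by an additive constant, which is harmless. When $X = \emptyset$, $p_1$ and $p_2$ absorb at most fixed $H_\lambda$-prefix/suffix portions of $W$ that do not depend on $h$, so the finiteness conclusion still goes through after translating by these fixed factors. Beyond this technicality the proof is the standard almost-malnormality mechanism for peripheral subgroups in relatively hyperbolic groups: the conjugation relation yields a short cycle with two special $H_\lambda$-components, and Lemma \ref{Omega} refuses to let them be simultaneously isolated without forcing $h$ into a fixed bounded region.
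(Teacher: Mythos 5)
The paper itself offers no proof of this lemma; it is quoted as known from \cite[Theorem 2.40]{Osi06a}, so I am judging your argument on its own. Your skeleton is indeed the standard mechanism for that theorem, and both the connected case (the coset argument forcing $gH_\lambda = H_\lambda$) and the final step (bounded $\Omega$-length plus finiteness of $\Omega$ gives finitely many $h$) are handled correctly. But there is a genuine gap at the pivotal sentence ``Otherwise, $p_1$ and $p_2$ are isolated components of $q_h$.'' Failure of $p_1$ and $p_2$ to be connected \emph{to each other} does not make either of them isolated: each could be connected to an $H_\lambda$-component of $q_h$ sitting inside the $W$- or $W^{-1}$-portion of the cycle. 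Such interior components are in general unavoidable, because $X$ is only a \emph{relative} generating set, so a word $W$ representing $g$ must typically use letters from $\mathcal H$, including letters from $H_\lambda\setminus\{1\}$. Your padding device cures a different and milder problem (absorption of $e_1$, $e_2$ into adjacent $H_\lambda$-letters); it does nothing to prevent an interior component $r$ from having a vertex $u$ with $uH_\lambda=gH_\lambda$, in which case $r$ and $e_1$ are connected, $e_1$ is not isolated, and Lemma \ref{Omega} cannot be applied to it. A concrete instance: $g=xh_0$ with $x\in X$, $h_0\in H_\lambda\setminus\{1\}$, and $W\equiv xh_0yy^{-1}$; the edge labelled $h_0$ ends at the vertex $g$ and is therefore connected to $e_1$.

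The missing idea is a normalization of $g$. Since $|H_\lambda\cap H_\lambda^{h_1gh_2}|=|H_\lambda\cap H_\lambda^{g}|$ and $h_1gh_2\in H_\lambda$ if and only if $g\in H_\lambda$ for all $h_1,h_2\in H_\lambda$, you may replace $g$ by an element of minimal $(X\sqcup\mathcal H)$-length in the double coset $H_\lambda gH_\lambda$ and take $W$ geodesic. Then, if an $H_\lambda$-component of the $W$-portion distinct from $p_1$ were connected to $e_1$, some proper prefix of $W$ would represent an element $u\in gH_\lambda\subseteq H_\lambda gH_\lambda$ with $|u|_{X\cup\mathcal H}<|g|_{X\cup\mathcal H}$, contradicting minimality (while $u=1$ would force $g\in H_\lambda$); the symmetric argument rules out connections from $e_2$ and from the $W^{-1}$-portion. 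With this normalization in place, your Case 2 genuinely produces two isolated components, Lemma \ref{Omega} gives $|h|_\Omega\le L(2\|W\|+2)$ for every admissible $h$, and the finiteness conclusion follows as you state.
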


\begin{lem}\label{hyp}
Suppose that a group $G$ is hyperbolic relative to a finite collection of hyperbolic
subgroups. Then $G$ is hyperbolic itself.
\end{lem}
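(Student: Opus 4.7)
The plan is to produce a finite presentation of $G$ with linear Dehn function; by the standard characterization of hyperbolicity, this forces $G$ to be hyperbolic. I will assemble such a presentation from a finite relative presentation of $G$, which exists by relative hyperbolicity, together with finite hyperbolic presentations of $H_1,\ldots,H_n$, and then bound the area of a null-homotopic word using the linear relative isoperimetric inequality together with the linear Dehn functions of the $H_i$'s.

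By the definition of relative hyperbolicity there is a finite relative generating set $X\subseteq G$ and a finite set of relators $R\subseteq F(X\sqcup\mathcal H)$ with $G=\langle X, H_1,\ldots,H_n\mid R\rangle$ and linear relative Dehn function: any word of length $\ell$ in $X\sqcup\mathcal H$ representing $1\in G$ equals, in $F(X)\ast H_1\ast\cdots\ast H_n$, a product of at most $L\ell$ conjugates of elements of $R^{\pm 1}$. Since each $H_i$ is hyperbolic and finitely generated, fix a finite presentation $H_i=\langle X_i\mid R_i\rangle$ with linear Dehn function. Substituting, once and for all, a fixed $X_i$-word for every $\mathcal H$-letter appearing in some relator of $R$, we obtain a finite presentation
\[
G=\langle \widetilde X\mid \widetilde R\rangle,\qquad \widetilde X=X\cup X_1\cup\cdots\cup X_n,\qquad \widetilde R=R\cup R_1\cup\cdots\cup R_n.
\]
Given a word $w$ of length $\ell$ in $\widetilde X$ representing $1\in G$, reading each letter of $\widetilde X\subseteq X\sqcup\mathcal H$ as a letter of $X\sqcup\mathcal H$ shows that $w$ traces a closed cycle $q$ of length $\ell$ in $\G $; the relative linear isoperimetric inequality yields a relative van Kampen diagram $\Delta$ for $w$ with at most $L\ell$ $R$-cells, each of bounded perimeter. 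The $\mathcal H$-edges of $\Delta$ group into maximal $H_\lambda$-subdiagrams, and the boundary label of each such subdiagram, once rewritten in $\widetilde X$, represents $1$ in the corresponding $H_\lambda$ and so is filled by a van Kampen diagram over $R_\lambda$ of area at most $C$ times its boundary length.

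The main obstacle is to bound the total $\widetilde X$-length of these $H_\lambda$-subdiagram boundaries by $O(\ell)$: a single $\mathcal H$-letter corresponds to an arbitrary element of some $H_\lambda$ whose $X_\lambda$-length is a priori unbounded. The control is precisely what Lemma \ref{Omega} provides. Choose $\Omega$ to be a finite set containing $X_1\cup\cdots\cup X_n$ together with the finitely many $\mathcal H$-letters appearing in relators of $R$; then the sum of $\Omega$-lengths of isolated $H_\lambda$-components of any cycle in $\G $ is at most $L$ times the cycle length. After reducing $\Delta$ so that $H_\lambda$-components are pairwise non-connected (absorbing connected components into single subdiagrams), apply this estimate to the boundary cycle of $\Delta$ and to the boundary cycles of the maximal $H_\lambda$-subdiagrams, each of which decomposes into segments of $\partial\Delta$ and into arcs of $R$-cell perimeters of bounded length. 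This yields the required linear bound on the $\widetilde X$-length of the $H_\lambda$-subdiagram boundaries, and hence a linear bound on the total area of the assembled van Kampen diagram for $w$ over $\widetilde R$. Therefore $\langle\widetilde X\mid\widetilde R\rangle$ has linear Dehn function and $G$ is hyperbolic.
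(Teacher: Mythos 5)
The paper offers no proof of Lemma~\ref{hyp} at all: it is listed among results ``well known'' and quoted from \cite{Osi06a}, so there is no in-paper argument to compare yours against. Your overall route is the standard one --- assemble a finite presentation $\langle \widetilde X\mid\widetilde R\rangle$ of $G$ from a finite relative presentation and finite presentations of the $H_i$, then bound the area of a null-homotopic word by combining the linear relative isoperimetric inequality with the linear Dehn functions of the $H_i$ --- and that route does prove the lemma. (Two small points you gloss over: the maximal $H_\lambda$-subdiagrams need not be simply connected, so a word about surgery is needed before you can fill them over $\langle X_\lambda\mid R_\lambda\rangle$; and a subquadratic bound would already suffice for hyperbolicity, which gives you more room.)

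The genuine problem is the step you yourself flag as ``the main obstacle.'' Lemma~\ref{Omega} does not deliver the estimate in the configuration you describe. The boundary cycle of a maximal $H_\lambda$-subdiagram is labelled entirely by letters of $H_\lambda\setminus\{1\}$ and all of its vertices lie in a single coset $gH_\lambda$; as a cycle in $\G$ it is a single $H_\lambda$-component of itself, so it has no isolated components to which the lemma's inequality could be applied, and the element represented by the full boundary word is the identity, so its $\Omega$-length is $0$ --- the lemma says nothing about the sum of the $X_\lambda$-lengths of the individual edge labels, which is the quantity you actually need. Applied to $\partial\Delta$ the lemma is equally vacuous, since the letters of $w$ already lie in $\widetilde X$ and have length $1$. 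Fortunately the bound you want follows from a direct count that you half-state but do not exploit: every edge on the boundary of a maximal $H_\lambda$-subdiagram of $\Delta$ lies either on $\partial\Delta$, where its label is a single generator from $X_\lambda$, or on the boundary of an $R$-cell, where its label is one of the finitely many $\mathcal H$-letters occurring in the relators of $R$, each of fixed finite $X_\lambda$-length $\le C_0$ (an $\mathcal H$-edge labelled in $H_\lambda\setminus\{1\}$ cannot bound an $S$-cell of a different $H_\mu$, by disjointness of the alphabets). Since $\Delta$ has at most $L\ell$ $R$-cells of uniformly bounded perimeter $M$, the total $X_\lambda$-length of all these boundaries is at most $\ell+2LM C_0\,\ell=O(\ell)$, with no appeal to Lemma~\ref{Omega}. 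With the estimate justified this way, your argument closes and coincides with the proof in \cite{Osi06a}.
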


In Section 4 we will also need the following result proved in \cite{Osi07}. (An independent proof for torsion free groups can also be found in \cite{GM}.) It may be thought of as an algebraic generalization of the Thurston's hyperbolic Dehn surgery theorem (we refer to \cite{GM,Osi07} for details).

\begin{thm}\label{DF}
Let $G$ be group hyperbolic relative to a collection of subgroups $\Hl $. Then for every finite subset $\mathcal A\subseteq G$, there exists a finite subset $\mathcal F\subseteq G\setminus \{ 1\} $ such that for any collection of subgroups  $\mathcal N=\{ N_\lambda \mid\lambda \in \Lambda \} $ satisfying $N_\lambda \lhd H_\lambda $ and $N_\lambda \cap \mathcal F=\emptyset $ for all $\lambda \in \Lambda $, the following hold.
\begin{enumerate}
\item[(a)] Let $N=\ll\bigcup_{\lambda \in \Lambda} N_\lambda \rr$ be the normal closure of $\bigcup_{\lambda \in \Lambda} N_\lambda$ in $G$. Then for every $\lambda \in \Lambda$, the natural map $H_\lambda/N_\lambda \to G/N$ is injective (equivalently, $H_\lambda \cap N=N_\lambda $).
\item[(b)] $G/N$ is hyperbolic relative to $\{ H_\lambda /N_\lambda \mid \lambda \in \Lambda\} $.
\item[(c)] The natural homomorphism $G\to G/N$ is injective on $\mathcal A$.
\end{enumerate}
\end{thm}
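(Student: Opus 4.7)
The plan is to realise the normal closure $N = \ll \bigcup_\lambda N_\lambda \rr$ as generated by a family of relators satisfying an extremely strong geometric small cancellation condition relative to the presentation of $G$, and then to analyse minimal van Kampen diagrams over this enlarged presentation using Lemma \ref{Omega}. This is the relatively hyperbolic analogue of Olshanskii's classical small cancellation method over hyperbolic groups.

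First I would fix the finite set $\Omega \subseteq G$ and the constant $L > 0$ supplied by Lemma \ref{Omega}, and choose a large constant $K$ whose value depends on $L$, on $|\mathcal A|$, and on the $\Omega$-lengths of the elements of $\mathcal A$. Set
\[ \mathcal F = \{ g \in G \setminus \{1\} : |g|_\Omega \le K \}, \]
enlarging it if necessary to exclude finitely many auxiliary elements required for part (c). Any family $\mathcal N = \{N_\lambda\}$ with $N_\lambda \cap \mathcal F = \emptyset$ then has the property that every nontrivial element of $N_\lambda$, viewed as a single peripheral letter, has $\Omega$-length strictly greater than $K$. Form the relative presentation of $G/N$ by adjoining to the relators of $G$ the set $\mathcal R = \bigsqcup_\lambda (N_\lambda \setminus \{1\})$. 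A word $w$ that is trivial in $G/N$ bounds a van Kampen diagram whose 2-cells are either relators of $G$ or \emph{$\mathcal R$-cells}, the latter having boundary a single $H_\lambda$-letter of $\Omega$-length $> K$.

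For parts (a) and (c), I would assume for contradiction that some $h \in H_\lambda \setminus N_\lambda$, respectively some $a \in \mathcal A \setminus \{1\}$, is trivial in $G/N$, and pass to a diagram realising this with a minimal number of $\mathcal R$-cells. The key claim is that the $H_\lambda$-components contributed by the $\mathcal R$-cells, after amalgamating mutually connected components, form an essentially isolated family in the cycle bounding the diagram: any remaining connection between two distinct $\mathcal R$-cells would allow us to merge them into a single $\mathcal R$-cell (since each $N_\lambda$ is a subgroup), contradicting minimality. Lemma \ref{malnorm} is essential here, since almost malnormality of peripheral cosets forces such connections to be realised by a single peripheral edge. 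Feeding the isolated components into Lemma \ref{Omega} yields an inequality of the form
\[ K \cdot \#\{\mathcal R\text{-cells}\} \le L \cdot \|w\|, \]
which rules out the presence of any $\mathcal R$-cell once $K$ is chosen sufficiently larger than $L \cdot \|w\|$. For (b), a surgery argument---excising each $\mathcal R$-cell and closing the gap with a single $H_\lambda / N_\lambda$-edge---transfers the linear isoperimetric function of $G$ to a linear isoperimetric function for the induced relative presentation of $G/N$ with respect to $\{H_\lambda / N_\lambda\}$.

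The main obstacle is the combinatorial control in the isolated-components analysis: after amalgamating mutually connected $\mathcal R$-cells into maximal clusters, one must certify that enough components remain isolated for Lemma \ref{Omega} to produce a useful bound. This requires choosing $K$ large compared to $L$, the diagram perimeter, and constants implicit in Lemma \ref{malnorm}, along with a standard preliminary reduction to reduced (tight) diagrams so that the counting is not corrupted by cancellations among relators of $G$.
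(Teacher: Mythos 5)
A point of order first: the paper does not prove Theorem \ref{DF} at all --- it is quoted from \cite{Osi07}, with \cite{GM} cited as an independent source in the torsion-free case. So there is no internal proof to match yours against; you were reconstructing a substantial external theorem. Your sketch does follow the general strategy of \cite{Osi07} (and of \cite{Osi10}): adjoin $\mathcal R=\bigsqcup_\lambda (N_\lambda\setminus\{1\})$ as new relators, take diagrams with a minimal number of $\mathcal R$-cells, and control the $\Omega$-lengths of the resulting peripheral components via Lemma \ref{Omega}. In that sense the approach is the right one. But the central quantitative step is a genuine gap, not a technicality.

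Lemma \ref{Omega} bounds the total $\Omega$-length of \emph{isolated components of a cycle in} $\G$ by $L$ times the length of that cycle. A van Kampen diagram is not a cycle: to apply the lemma you must cut the diagram open along paths joining the $\mathcal R$-cells to the boundary, and the resulting cycle has length $\|w\|$ plus twice the lengths of the cutting paths plus the perimeters of the $\mathcal R$-cells themselves --- none of which is controlled by $\|w\|$. Hence the inequality $K\cdot\#\{\mathcal R\text{-cells}\}\le L\|w\|$ does not follow from Lemma \ref{Omega}; obtaining any bound of this shape is precisely the hard content of \cite{Osi07} (via the ``windmill'' induction over balls in $\G$) or of the small cancellation machinery of \cite{Osi10}. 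The problem is fatal for part (b): $\mathcal F$, hence $K$, must be fixed before the word $w$ is given, whereas a relative isoperimetric inequality for $G/N$ must hold for $w$ of arbitrary length, so you cannot arrange $K>L\|w\|$ uniformly; the surgery you describe for (b) presupposes a bound on the number of $\mathcal R$-cells that is linear in $\|w\|$ with a constant independent of $w$, which is the theorem rather than a step toward it. Two smaller inaccuracies: Lemma \ref{malnorm} plays no role in merging connected $\mathcal R$-cells (two connected $H_\lambda$-components lie in the same coset by definition and are always joined by a single peripheral edge; the merge uses only $N_\lambda\lhd H_\lambda$); and for part (a) the desired conclusion is not ``no $\mathcal R$-cells'' but that the only surviving configuration is a single $\mathcal R$-cell connected to the boundary edge, whence $h\in N_\lambda$ by normality --- a word $h\in N_\lambda$ does bound a diagram with one $\mathcal R$-cell.
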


\section{Special elements of hyperbolic groups}

Let $G$ be a relatively hyperbolic group. We call an element $g\in G$  {\it special} if it is loxodromic and $E_G(g)=\langle g\rangle $. If $G$ is an ordinary hyperbolic group, it can be thought of as hyperbolic relative to the trivial subgroup. Then the same definition applies. In this case `loxodromic' simply means `of infinite order'.

It is not hard to show using Lemma \ref{malnorm} that if $G$ contains a nontrivial finite normal subgroup, then it has no special elements. It turns out that the absence of nontrivial finite normal subgroups is also a sufficient condition for the existence of special elements in a non-elementary relatively hyperbolic group. Indeed the following is an immediate consequence of the combination of \cite[Corollary 4.5]{Osi06b} and \cite[Lemma 3.8]{AMO}. (For hyperbolic groups, this result was proved by Olshanskii in \cite{Ols93}.)

\begin{lem}\label{special-rh}
Let $G$ be a non-elementary group hyperbolic relative to a collection of proper subgroups. Suppose also that $G$ has no nontrivial finite normal subgroups. Then $G$ contains a special element.
\end{lem}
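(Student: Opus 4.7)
The plan is to combine the two cited results in essentially the order the authors suggest. First, apply Corollary 4.5 of \cite{Osi06b}: since $G$ is non-elementary and hyperbolic relative to a collection of proper subgroups, this corollary guarantees the existence of loxodromic elements in $G$, and in fact produces a rich supply of them --- including pairs $g_1, g_2$ whose maximal elementary subgroups $E_G(g_1)$ and $E_G(g_2)$ have only a finite subgroup in common. For each such loxodromic $g$, Lemma \ref{Eg}(a) says $E_G(g)$ is virtually infinite cyclic, so it decomposes as a finite-by-cyclic extension whose finite part $K(g)$ is the (unique) maximal finite normal subgroup of $E_G(g)$. The element $g$ is special precisely when $K(g) = \{1\}$, when $g$ is primitive in this cyclic quotient, and when $E_G(g)$ has no index-two orientation-reversing part.

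Next I would invoke Lemma 3.8 of \cite{AMO} as a black box. The expected content is a ping-pong / small cancellation statement: given suitable loxodromic $g_1, g_2$ with distinct maximal elementary subgroups, for sufficiently large exponents the element $w = g_1^n g_2^m$ is again loxodromic, and $E_G(w)$ is tightly controlled --- specifically, its finite part $K(w)$ is forced to lie inside a subgroup common to conjugates of $K(g_1)$ and $K(g_2)$. Combining this with the freedom (from Corollary 4.5 of \cite{Osi06b}) to replace the pair $(g_1, g_2)$ by its conjugates, any element of $K(w)$ that survives the construction uniformly in all such choices would lie in a subgroup normalized by all of $G$. By Lemma \ref{malnorm} this subgroup is finite, hence a finite normal subgroup of $G$, hence trivial by hypothesis. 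Thus $K(w) = \{1\}$, and after replacing $w$ by its primitive root if necessary one obtains a special element.

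The main technical obstacle --- presumably handled inside the cited results --- is the ping-pong step itself: showing that for large $n,m$ the element $w = g_1^n g_2^m$ has $E_G(w)$ as small as claimed. This is done by applying Lemma \ref{Omega} to cycles in $\G$ that arise from a hypothetical relation $h w^k h^{-1} = w^{\pm k}$ with $h \in E_G(w)$; the isolated-component estimate bounds how far $h$ can push $w$-axes around, pinning $h$ inside a conjugate of the finite parts of the $E_G(g_i)$. Granting this, the deduction of the lemma from Corollary 4.5 of \cite{Osi06b} together with Lemma 3.8 of \cite{AMO} is indeed immediate.
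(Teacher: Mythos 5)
Your overall strategy --- produce a loxodromic element whose maximal elementary subgroup has trivial finite part, and kill that finite part using the no-finite-normal-subgroup hypothesis --- is the right one, and it is essentially how the cited results are themselves proved. But as written the derivation is not the ``immediate consequence'' the paper has in mind, and two steps do not go through. First, the passage from ``any element of $K(w)$ that survives the construction uniformly in all such choices lies in a subgroup normalized by all of $G$'' to ``$K(w)=\{1\}$'' is a non sequitur: what you need is that the finite part of $E_G(w)$ for the \emph{particular} $w$ you construct is contained in a finite normal subgroup of $G$, and nothing in your argument pins $K(w)$ inside the intersection $\bigcap_h E_G(h)$ taken over all loxodromic $h$; that containment is precisely the content of the cited lemma, not something recoverable from conjugation-invariance of the construction. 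Relatedly, Lemma \ref{malnorm} as stated concerns the peripheral subgroups $H_\lambda$, not the subgroups $E_G(g)$; to apply it there you must first adjoin $E_G(g)$ to the peripheral structure via Lemma \ref{Eg}(c). Second, you correctly record that $g$ is special only if $E_G(g)$ also has no orientation-reversing part, but your conclusion only establishes $K(w)=\{1\}$, which is compatible with $E_G(w)\cong D_\infty$; that case is never excluded, and passing to a primitive root does not remove it.

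For comparison, the paper's proof really is a one-line combination because Lemma 3.8 of \cite{AMO} is, in the role the paper assigns to it, the relatively hyperbolic analogue of Lemma \ref{Ols}: it hands you a loxodromic $g$ with $E_G(g)=\langle g\rangle\times E_G(G)$, where $E_G(G)$ (the analogue of (\ref{EH}), intersecting over loxodromic elements) is a \emph{finite normal} subgroup of $G$, while Corollary 4.5 of \cite{Osi06b} supplies the loxodromic elements needed to make sense of this. The hypothesis then forces $E_G(G)=\{1\}$, so $g$ is special, with the dihedral and finite-part issues already absorbed into the cited statement --- exactly as in the proof of Proposition \ref{coset}, which runs this argument for hyperbolic groups using Lemma \ref{Ols}. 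Your proposal instead reconstructs a proof \emph{of} that lemma (the $g_1^ng_2^m$ ping-pong) rather than using its statement, which is why the extra, unjustified steps appear.
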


The main goal of this section is to prove Proposition \ref{coset} which provides us with special elements in cosets of normal subgroups. Although the proposition only concerns hyperbolic groups, its proof uses relative hyperbolicity. In fact, one could obtain the same result without mentioning relative hyperbolicity. This can be done, for example, by modifying the approach taken in  \cite{Ols93}. However we prefer to provide a proof based on relative hyperbolicity as it seems shorter; moreover, some technical results obtained in the course of the proof (e.g., the lemma below) may be useful elsewhere.

\begin{lem}\label{ahn}
Let $G$ be a relatively hyperbolic group, $h\in G$ a special element. Then for every $a\notin E_G(h)$ there exists a positive integer $n$ such that the element $g=ah^n$ is special.
\end{lem}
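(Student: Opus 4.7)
The plan is to exploit Lemma \ref{Eg}(c) to view $G$ as hyperbolic relative to the enlarged collection $\Hl\cup\{E_G(h)\}=\Hl\cup\{\langle h\rangle\}$, so that every power $h^n$ is represented by a single peripheral edge in the Cayley graph $\Gamma'$ of $G$ with respect to the enlarged generating set $X\sqcup\mathcal H\sqcup(\langle h\rangle\setminus\{1\})$. Let $\Omega\subseteq G$ and $L>0$ be the finite subset and constant supplied by Lemma \ref{Omega} for this enlarged structure. The decisive quantitative input is that $|h^n|_\Omega\to\infty$ as $n\to\infty$: since $\Omega$ is finite, at most finitely many powers of $h$ can lie in any bounded $\Omega$-ball of $\langle\Omega\rangle$.

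I would first establish that $g=ah^n$ is loxodromic in the enlarged peripheral structure (and hence also in the original one) for all sufficiently large $n$. Suppose, on the contrary, that some nonzero power $g^k$ is conjugate to an element $u$ of one of the peripheral subgroups. Representing $g^k$ by a path consisting of $k$ alternating $a$-subpaths and single $\langle h\rangle$-edges labeled $h^n$, the relation $t^{-1}g^k t u^{-1}=1$ gives a cycle $q$ in $\Gamma'$ whose combinatorial length is bounded in terms of $k$, $|a|$, $|t|$, and the presentation of $u$, all independent of $n$. If any of the $k$ distinguished $\langle h\rangle$-edges is isolated in $q$, Lemma \ref{Omega} yields $|h^n|_\Omega\le L\,\ell(q)$, contradicting the growth of $|h^n|_\Omega$. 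Otherwise every such edge is connected to another $\langle h\rangle$-component of $q$, and pairing them off produces an algebraic identity that, via Lemma \ref{malnorm} controlling intersections of conjugates of $\langle h\rangle$, forces $a\in\langle h\rangle$ and contradicts the hypothesis $a\notin E_G(h)$.

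To finish I would verify $E_G(g)=\langle g\rangle$. Any $t\in E_G(g)$ satisfies $t^{-1}g^m t=g^{\pm m}$ for some $m\ge 1$ by Lemma \ref{Eg}(b); the resulting relation gives a cycle $q'$ in $\Gamma'$ with $2m$ distinguished $\langle h\rangle$-edges coming from the $h^{\pm n}$ blocks on both sides. An isolation analysis analogous to the previous step forces these $\langle h\rangle$-components to pair up coset-by-coset in the unique cyclically consistent way, which leaves no option for $t$ other than a power of $g$; the same cycle argument applied to a hypothetical torsion element of $E_G(g)$ produces an unpaired $\langle h\rangle$-component and rules out torsion. The main obstacle throughout is the bookkeeping in the non-isolated case: distinct $\langle h\rangle$-components may fuse into a common coset of $\langle h\rangle$ or of a conjugate, and Lemma \ref{malnorm} is precisely the tool needed to convert each such fusion into an algebraic constraint incompatible with $a\notin\langle h\rangle$, after which Lemma \ref{Omega} completes the contradiction for all sufficiently large $n$.
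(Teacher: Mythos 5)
Your plan is essentially the paper's own proof in its main part: pass to the peripheral structure $\Hl\cup\{\langle h\rangle\}$ via Lemma \ref{Eg}(c), choose $n$ so that $|h^n|_\Omega$ exceeds a fixed multiple of the constant $L$ from Lemma \ref{Omega}, show the $\langle h\rangle$-components of a path labelled $(ah^n)^m$ are isolated, and then analyse the quadrangle coming from an element of $E_G(g)$, using Lemma \ref{malnorm} to kill the peripheral ``connectors'' and read off that the element is a power of $g$. Two concrete caveats. First, the paper does not reprove loxodromicity of $ah^n$; it cites \cite[Lemma 4.4]{Osi06b}. Your sketch of that step does not work as written: the witnesses $k$, $t$, $u$ of a failure of loxodromicity of $g=ah^n$ depend on $n$, so the length of your cycle $q$ is \emph{not} ``independent of $n$'', and you cannot pick $n$ in advance to defeat a bound that grows with $n$; either invoke the cited lemma or run a genuinely different (quasi-geodesic) argument. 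Second, in the step $E_G(g)=\langle g\rangle$ you must take the exponent $m$ in $t^{-1}g^mt=g^{\pm m}$ large compared with $|t|_{X\cup\mathcal H}$ (harmless, since the relation propagates to all multiples of $m$, and this is what the paper does with $m=8k$, $k\ge |d|_{X\cup\mathcal H}$): this is exactly what makes the count of isolated versus connected components force many $\langle h\rangle$-edges of the top side to pair with edges of the bottom side rather than with components of the conjugator sides; with $m$ small the pairing argument never gets started. With those two repairs your outline matches the paper's argument and buys nothing different.
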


\begin{proof}
Suppose that $G$ is hyperbolic relative to $\Hl $. By Lemma \ref{Eg}, the group $G$ is also hyperbolic relative to $\Hl \cup \{ K\} $, where $K=E_G(h)$. The fact that $ah^n$ is loxodromic for all but finitely many $n$ is proved in \cite[Lemma 4.4]{Osi06b}. It remains to show that $E_G(g)=\langle g\rangle $.

Let $X$ be a finite relative generating set of $G$ with respect to $\Hl \cup \{ K\} $. Without loss of generality we can assume $a\in X$. Let also $L$ be the constant given by Lemma \ref{Omega} applied to the collection of peripheral subgroups $\Hl \cup \{ K\} $. Since $|\Omega |<\infty $, there exists $n\in \mathbb N$ such that
\begin{equation}\label{hn}
|h^n|_\Omega \ge 34L.
\end{equation}
Let $$\mathcal H =\left(\bigsqcup\limits_{\lambda\in \Lambda} H_{\lambda}\setminus \{ 1\} \right)\sqcup (K\setminus \{ 1\} ).$$ We denote by $W$ the word $ah^n$ in $X\cup \mathcal H$, where $h^n$ is considered as single letter from $K\setminus \{ 1\}$. Thus $\| W\| =2$. Let $p$ be a path in $\G $ such that $\Lab (p)\equiv W^m$ for some $m\in \mathbb N$. Then $p=r_1p_1\ldots r_{2m}p_{2m}$, where $p_i$'s are edges labelled by $h^{\pm n}$ and $r_i$'s are edges labelled by $a$. Clearly $p_i$'s are $K$-components of $p$ and no two consecutive components of $p$ are connected as $a\notin K$.

We first show that all $K$-components of $p$ are isolated. Indeed suppose that $p_i$ and $p_j$ are connected for some $j>i$ and $j-i$ is minimal possible. Note that  $j=i+1+k$ for some $k\ge 1$, as no two consecutive $K$-components of $p$ are connected. Let $t$ denote the segment of $p$ with $t_-=(p_i)_+$ and $t_+=(p_j)_-$, and let $e$ be an empty path or an edge in $\G $ labelled by an element of $K \setminus\{ 1\}$ such that $e_-=(p_i)_+$, $e_+=(p_j)_-$. Note that the components $p_{i+1}, \ldots , p_{i+k}$ are isolated in the cycle $te^{-1}$ as otherwise we can pass to another pair of connected components with smaller value of $j-i$. By Lemma \ref{Omega} we have
$$
\sum\limits_{s=1}^k \lo (p_{i+s})\le  L\ell (te^{-1})\le L (2k+2).
$$
Hence $\lo (p_{i+s})\le L (2+2/k)\le 4L$ for some $s$, which
contradicts (\ref{hn}). Thus all $K$-components of $p$ are isolated.

Further let $d\in E_G(g)$. Then $d$ commutes with arbitrary large powers of $g$. We choose $k$ such that $[d,g^{8k}]=1$ and
\begin{equation}\label{k}
k \ge |d|_{X\cup\mathcal H}.
\end{equation}
Let $$c=upv^{-1}q^{-1}$$ be a quadrangle in $\G $ such that $u,v$ are geodesics whose labels represent $d$ in $G$, and $\Lab (p)\equiv \Lab (q)\equiv W^{8k}$. Note that $\ell (p)=\ell (q)=16k$. Hence by (\ref{k}) we have
\begin{equation}\label{lc}
\ell (c)\le 34k.
\end{equation}
Let $\{ p_i\mid i\in I\}$ (respectively, $\{ p_i\mid i\in J\}$) be the set of all $K$-components of $p$ which are isolated (respectively, not isolated) in $c$. By Lemma \ref{Omega} we obtain
$$
\sum\limits_{i\in I} \lo (p_i)\le 34kL.
$$
Since $\lo (p_i)\ge  34L$ for every $i\in I$ by (\ref{hn}), we have $|I|\le k$. Since the total number of $K$-components of $p$ is $8k$, we have $|J|\ge 7k$. Recall that no two distinct components of $p$ are connected. Further, by (\ref{k}) at most $2k$ components of $p$ are connected to components of $u$ or $v$. Hence there exists a subset $J_0\subseteq J$ such that $|J_0|\ge 5k$, and $p_i$ is not connected to a component of $u$ or $v$ whenever $i\in J_0$. Thus for every $i\in J_0$, $p_i$ is connected to a component of $q$.

Let us fix any $i\in J_0$. Let $q_j$ be the $K$-component of $q$ connected to $p_i$. In what follows we assume that $i\le 4k$ (the case $i\ge 4k$ is symmetric). Let $f_1$ be the empty path or an edge of $\G $ connecting $(q_j)_+$ to $(p_i)_+$ and labelled by an element of $K$. Let $$c_1=f_1[(p_i)_+, p_+]v^{-1}[(q_j)_+, q_+]^{-1},$$ where $[(p_i)_+, p_+]$ and $[(q_j)_+, q_+]$ denote the corresponding segments of $p$ and $q$. By our assumption, $[(p_i)_+, p_+]$ has at least $4k$ $K$-components. Repeating the arguments from the previous paragraph, we can show that at least $k$ of them are connected to components of $[(q_j)_+, q_+]$. In particular, there exists a $K$-component $p_{i+s}$ of $p$ connected to a $K$-component of $q$ for some $s\in \mathbb N$. Assume that $s$ is minimal possible. Let $q_{j+t}$ be the $K$-component of $q$ connected to $p_{i+s}$ and let $f_2$ be the empty path of an edge of $\G$ connecting $(p_{i+s})_-$ to $(q_{j+t})_-$ (see Fig. 1).

\begin{figure}
    \def\svgwidth{400pt}
  \centering{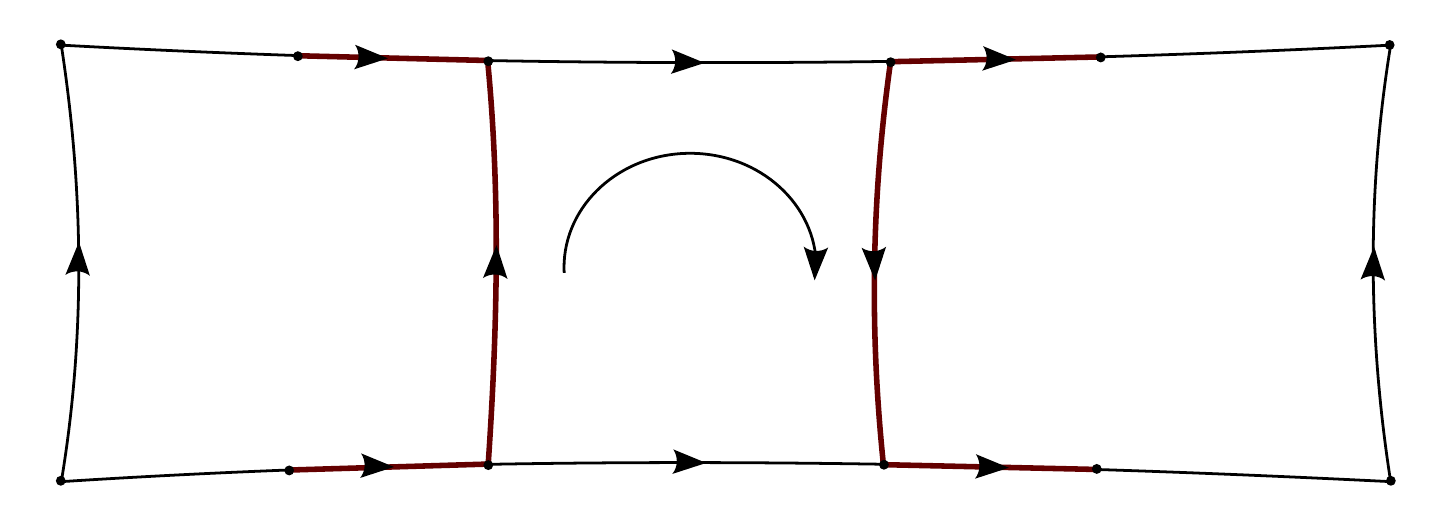\\
  \caption{Decomposition of the cycle $c$ in the proof of Lemma \ref{special-rh}. } }
\end{figure}

Using minimality of $s$ and the fact that all components of $p$ (respectively, $q$) are isolated in $p$ (respectively, $q$), we obtain that all components from the set $$M=\{ p_l\mid i<l<i+s\} \cup \{ q_l\mid j<l<j+t\} $$ (if any) are isolated in the loop $$c_2=f_1[(p_i)_+,(p_{i+s})_-]f_2[(q_{j})_+, (q_{j+t})_-]^{-1},$$ where $[(p_i)_+,(p_{i+s})_-]$ and $[(q_{j})_+, (q_{j+t})_-]$ are the corresponding segments of $p$ and $q$, respectively. Note that $\ell (c_2) \le 2|M|+4\le 6|M|$ if $M\ne \emptyset $. However this contradicts (\ref{hn}) and Lemma \ref{Omega} applied to $c_2$ and components from $M$. Hence $M=\emptyset $, i.e., $s=t=1$. Now reading the label of $c_2$ we obtain $h_1ah_2a^{-1}=1$, where  $h_1, h_2$ are elements of $K$ represented by labels of $f_1$ and $f_2$, respectively. Since $K$ is infinite cyclic, Lemma \ref{malnorm} implies $h_1=h_2=1$. Thus $f_1$ (as well as $f_2$) is the empty path. This implies, via reading the label of the cycle $u[p_-,(p_i)_+][q_-, (q_j)_+]^{-1}$, that $d$ equals a power of $g$. Thus $E_G(g)=\langle g\rangle $.
\end{proof}

Let $G$ be a hyperbolic group, $H\le G$. Denote by $H^0$ the set of all elements of $H$ of infinite order. Let
\begin{equation}\label{EH}
E_G(H)=\bigcap\limits_{h\in H^0} E_G(h).
\end{equation}
It is easy to show that if $H$ is non-elementary, then $E_G(H)$ is finite \cite{Ols93}.

The following lemma was proved by Olshanskii \cite{Ols93}.

\begin{lem}\label{Ols}
Let $G$ be a hyperbolic group, $H\le G$ a non-elementary subgroup.  Then there exists an element $h\in H$ of infinite order such that $E_G(h)=\langle h\rangle \times E_G(H)$.
\end{lem}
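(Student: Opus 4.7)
\medskip

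\noindent\textbf{Proof plan for Lemma \ref{Ols}.} The plan is to build the desired element by first centralizing $E_G(H)$, then producing a loxodromic element whose maximal elementary subgroup has no ``extra'' symmetries.

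First I would record two preliminary facts. Since $H$ is non-elementary, the subgroup $E_G(H)$ defined in \eqref{EH} is finite; this is essentially because two non-commensurable loxodromics have disjoint endpoint pairs on $\partial G$, so the stabilizer of four points is finite. Second, $E_G(H)$ is normalized by $H$: for $g\in H$ conjugation permutes $H^0$ (the infinite-order elements of $H$), and $g E_G(h) g^{-1} = E_G(ghg^{-1})$, so $g E_G(H) g^{-1} = \bigcap_{h\in H^0} E_G(g h g^{-1}) = E_G(H)$. Consequently the homomorphism $H \to \operatorname{Aut}(E_G(H))$ has finite-index kernel $H_0 \le H$, which is non-elementary and centralizes $E_G(H)$.

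Next I would pick two loxodromic elements $f_1, f_2 \in H_0$ with disjoint endpoint pairs on $\partial G$, which exist because $H_0$ is non-elementary, and consider the family $h_n = f_1 f_2^n$. Standard ``North--South dynamics'' arguments (or Lemma \ref{ahn} applied after viewing $G$ as relatively hyperbolic with respect to $E_G(f_2)$) show that $h_n$ is loxodromic for all sufficiently large $n$. Since $h_n \in H_0$ centralizes $E_G(H)$, and $\langle h_n \rangle \cap E_G(H)=1$ as $h_n$ has infinite order while $E_G(H)$ is finite, we automatically obtain a direct product
\[
\langle h_n \rangle \times E_G(H) \;\subseteq\; E_G(h_n).
\]
The substantive content is to force equality for some $n$.

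For the equality I would argue as follows. The quotient $E_G(h_n)/\langle h_n\rangle$ is a finite group whose order is uniformly bounded: in a hyperbolic group there is a universal bound on the order of finite subgroups, and the maximal elementary subgroup containing a loxodromic element is virtually cyclic with a uniformly controlled finite part. Suppose for contradiction that no $n$ works; then after passing to a subsequence, one can extract elements $d_n \in E_G(h_n)\setminus(\langle h_n\rangle \cdot E_G(H))$ of bounded word length representing the non-trivial cosets. Because each $d_n$ conjugates some power $h_n^{m}$ to $h_n^{\pm m}$ and $h_n = f_1 f_2^n$, translating this relation into the Cayley graph and using the isolated-components machinery (Lemma \ref{Omega}) exactly as in the proof of Lemma \ref{ahn} — where the roles of $a$ and $h^n$ are played by $f_1$ and $f_2^n$ — one shows that for $n$ large, any such $d_n$ must already lie in $E_G(f_1)\cap E_G(f_2)$. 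Finally, by varying the pair $(f_1,f_2)$ across enough loxodromic elements of $H_0$ (or, equivalently, by replacing $f_1, f_2$ by further independent loxodromics and iterating), the intersection of the groups $E_G(f_1)\cap E_G(f_2)$ cuts down to $E_G(H)$, giving the contradiction.

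The main obstacle I expect is the last step: controlling the ``extra'' finite part of $E_G(h_n)$ and showing it cannot grow beyond $E_G(H)$. The cleanest route is the geometric one sketched above, mimicking the cycle/quadrangle analysis in the proof of Lemma \ref{ahn}, with the key input being that $f_1 \notin E_G(f_2)$ forces $f_2^n$-components along an axis-like path of $h_n$ to be isolated, so any would-be element of $E_G(h_n)$ outside $\langle h_n\rangle \cdot (E_G(f_1)\cap E_G(f_2))$ yields a cycle violating Lemma \ref{Omega} once $|f_2^n|_\Omega$ is large enough.
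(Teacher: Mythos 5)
The paper does not actually prove this lemma: it is quoted from Olshanskii \cite{Ols93} (``The following lemma was proved by Olshanskii''), so there is no in-paper argument to compare yours against. Judged on its own, your proposal follows the right general strategy, and its first half is sound: $E_G(H)$ is finite and normalized by $H$; the kernel $H_0$ of $H\to\operatorname{Aut}(E_G(H))$ is a non-elementary finite-index subgroup of $H$ centralizing $E_G(H)$; and for any infinite-order $h\in H_0$ the subgroup $\langle h\rangle\times E_G(H)$ is elementary, contains $h$, and hence lies in $E_G(h)$ by Lemma \ref{Eg}(a)--(b). So the containment $\langle h_n\rangle\times E_G(H)\subseteq E_G(h_n)$ is fine, and the issue is, as you say, the reverse inclusion.

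There are two gaps in the second half. The smaller, reparable one: coset representatives $d_n$ of $E_G(h_n)$ modulo $\langle h_n\rangle\cdot E_G(H)$ need not have bounded word length (torsion elements of $E_G(h_n)$ live near the axis of $h_n$, whose translation length grows with $n$), so you cannot pass to a constant subsequence; the correct set-up, as in the proof of Lemma \ref{ahn}, is to fix $d\in E_G(h_n)$ and work with a power of $h_n$ chosen large relative to $|d|_{X\cup\mathcal H}$. The serious gap is the final step. The isolated-components argument can at best show that, for the \emph{fixed} pair $(f_1,f_2)$ and $n$ large, every $d\in E_G(h_n)$ lies in $\langle h_n\rangle\cdot\bigl(E_G(f_1)\cap E_G(f_2)\bigr)$. ``Varying the pair and intersecting'' does not rescue this, because the candidate element $h_n=f_1f_2^n$ changes with the pair: a conclusion about $E_G(f_1f_2^n)$ for one pair says nothing about the element produced from another pair, so there is no single $h$ about which the information has been intersected. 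What is actually needed --- and what Olshanskii establishes as a separate preliminary lemma before constructing $h$ --- is the existence of \emph{one} pair of non-commensurable infinite-order elements $g_1,g_2\in H$ with $E_G(g_1)\cap E_G(g_2)=E_G(H)$; with that in hand your argument closes. Producing such a pair from the definition $E_G(H)=\bigcap_{h\in H^0}E_G(h)$ is genuine work (any two such intersections are already finite, but collapsing an intersection over many $h$'s into an intersection of just two requires another construction of the same flavor), and this is the missing idea in your proposal.
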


We are now ready to state and prove the main result of this section.

\begin{prop}\label{coset}
Let $G$ be a hyperbolic group without nontrivial finite normal subgroups. Then for every nontrivial element $a\in G$ and every $x\in G$, there exists a special element $g\in x\ll a\rr^G $.
\end{prop}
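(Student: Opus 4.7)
The plan is to produce a special element inside the normal closure $N := \ll a\rr^G$ by applying Lemma~\ref{Ols} to $N$ itself, and then, if $x \notin N$, to translate it into the coset $xN$ using Lemma~\ref{ahn}. I will implicitly assume $G$ is non-elementary, since in an elementary hyperbolic group one has $E_G(g) = G \ne \langle g\rangle$ for every nontrivial loxodromic $g$, so the conclusion is unattainable and the statement should be read modulo this standing assumption (consistent with Lemmas~\ref{special-rh} and~\ref{Ols}).

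First I would observe that $N$ is non-elementary. Since $a \ne 1$ and $G$ contains no nontrivial finite normal subgroup, $N$ is infinite; and an infinite normal subgroup of a non-elementary hyperbolic group is itself non-elementary (a standard consequence of the fact that the normalizer of an infinite virtually cyclic subgroup in a hyperbolic group is virtually cyclic). Applying Lemma~\ref{Ols} to $H = N$ then supplies an element $h \in N$ of infinite order such that
\[
E_G(h) = \langle h\rangle \times E_G(N).
\]
So to make $h$ special it suffices to show $E_G(N) = 1$.

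The key step is this verification. I would argue $E_G(N)$ is both finite and normal in $G$. For normality, since $N \lhd G$ the map $h \mapsto ghg^{-1}$ permutes $N^0$ for each $g\in G$, and the identity $gE_G(h)g^{-1} = E_G(ghg^{-1})$ gives
\[
g\, E_G(N)\, g^{-1} \;=\; \bigcap_{h\in N^0} E_G(ghg^{-1}) \;=\; E_G(N).
\]
For finiteness, non-elementarity of $N$ lets me choose $h_1,h_2 \in N^0$ with $E_G(h_1) \ne E_G(h_2)$, and the intersection of two distinct maximal elementary subgroups of a hyperbolic group is finite; therefore $E_G(N) \subseteq E_G(h_1) \cap E_G(h_2)$ is finite. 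The hypothesis on $G$ then forces $E_G(N) = 1$, so $h$ is special.

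Finally I would split on whether $x \in N$. If so, $xN = N \ni h$ and we are done. Otherwise $x \notin N \supseteq \langle h\rangle = E_G(h)$, so Lemma~\ref{ahn} yields $n \in \N$ such that $g := xh^n$ is special; since $h^n \in N$, this $g$ lies in $xN$. The main obstacle I anticipate is the step $E_G(N) = 1$: the normality computation is short but easy to mishandle, and the finiteness step relies on the nontrivial structural input that distinct maximal elementary subgroups of a hyperbolic group meet in a finite set — this is where the assumption of no nontrivial finite normal subgroup is truly exploited.
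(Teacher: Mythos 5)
Your proof is correct and follows essentially the same route as the paper's: establish that $\ll a\rr^G$ is non-elementary, deduce $E_G(\ll a\rr^G)=\{1\}$ from its finiteness and normality, apply Lemma~\ref{Ols} to obtain a special $h$ in the normal closure, and then use Lemma~\ref{ahn} to move into the coset $x\ll a\rr^G$ when needed. The only (immaterial) differences are that you justify non-elementarity of the normal closure via normalizers of virtually cyclic subgroups where the paper uses a characteristic infinite cyclic subgroup and its centralizer, and you make explicit the standing non-elementarity assumption on $G$ that the paper leaves implicit.
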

\begin{proof}
Let $H=\ll a\rr^G$. Note that since $G$ is non-elementary, so is $H$. Indeed otherwise $H$ contains an infinite cyclic characteristic subgroup $C$, which is normal in $G$. Then the centralizer $C_G(C)$ has finite index in $G$. However, $C$ has finite index in $C_G(C)$ (say, by Lemma \ref{Eg}) and hence $G$ is elementary. A contradiction.

Thus $E_G(H)$ is a finite subgroup of $G$. Since $H$ is normal in $G$, it easily follows from (\ref{EH}) that $E_G(H)$ is normal in $G$ as well. Therefore $E_G(H)=\{ 1\} $. By Lemma \ref{Ols} there exists $h\in H$ of infinite order such that $E_G(h)=\langle h\rangle$, i.e., $h$ is a special element of $G$. If $x\in \langle h\rangle \le H$, then we can take $g=h$. Otherwise we apply Lemma \ref{ahn} and we obtain a special element $g\in xH$.
\end{proof}

\section{Simple groups with positive $\ell^2$-Betti numbers}

In this section we deal with groups admitting presentations of some special kind. Namely, we will call an {\it irreducible torsion presentation} any presentation $\mathcal P$ of the form
\begin{equation}\label{irred}
\langle X \mid  R_1^{n_1}, \dots, R_k^{n_k} \rangle,
\end{equation}
where the order of the element of $G$ represented by $R_i$ is $n_i$ for every $i=1, \ldots , k$. Associated to this presentation is the quantity $$\sigma (\mathcal P)= \sum_{i=1}^k \frac{1}{n_i}.$$
The next result is a simplification of a theorem of Peterson and the second author \cite{PT}.

\begin{thm}[Theorem 3.2 in \cite{PT}]  \label{PT} Let $G$ be a group given by an irreducible torsion presentation (\ref{irred}), where $|X|<\infty $. Then
$$\beta_{1}^{(2)}(G) \geq |X|-1 - \sigma (\mathcal P). $$
\end{thm}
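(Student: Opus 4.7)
The plan is to read the desired inequality directly off the cellular $\ell^2$-chain complex of the presentation $2$-complex $K$ associated to $\mathcal P$, which has one $0$-cell, $|X|$ oriented loops (one per $x\in X$), and $k$ $2$-discs (one per relator $R_i^{n_i}$). First I would note that $\beta_1^{(2)}(G)=\beta_1^{(2)}(K)$: since $\pi_1(K)=G$, one builds a $K(G,1)$ from $K$ by attaching cells of dimension $\ge 3$, and such cells enter the $\C G$-cellular chain complex only in degrees $\ge 3$, leaving $\partial_1$ and $\partial_2$ unchanged. It thus suffices to analyze
$$
\ell^2(G)^k \xrightarrow{\;\partial_2\;} \ell^2(G)^{|X|} \xrightarrow{\;\partial_1\;} \ell^2(G),
$$
where $\beta_1^{(2)}(G)=\dim_{\mathcal N(G)}\ker\partial_1-\dim_{\mathcal N(G)}\overline{\mathrm{im}\,\partial_2}$. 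One always has $\dim\ker\partial_1\ge |X|-1$, since $\dim\overline{\mathrm{im}\,\partial_1}\le \dim_{\mathcal N(G)}\ell^2(G)=1$, so the problem reduces to showing $\dim\overline{\mathrm{im}\,\partial_2}\le\sigma(\mathcal P)$.

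The decisive input is Fox calculus. The standard identity
$$
\frac{\partial R_j^{n_j}}{\partial x}=\bigl(1+R_j+R_j^2+\cdots+R_j^{n_j-1}\bigr)\frac{\partial R_j}{\partial x}
$$
shows that $\partial_2(\tilde e_j)=s_j\cdot v_j$, where $s_j:=1+R_j+\cdots+R_j^{n_j-1}\in \C G$ and $v_j\in (\C G)^{|X|}$ is the vector of Fox derivatives of $R_j$. Because $R_j$ has order exactly $n_j$ in $G$, the element $s_j$ is self-adjoint, satisfies $s_j^2=n_j s_j$, and has $\tau(s_j)=1$ (only the $i=0$ summand $R_j^i=1$ contributes); hence $p_j:=s_j/n_j$ is a self-adjoint projection in $\mathcal N(G)$ of trace $1/n_j$. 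This is the one place in the argument where the ``irreducible'' hypothesis matters: if the order of $R_j$ in $G$ were merely a proper divisor $m_j\mid n_j$, then $\tau(p_j)=1/m_j$ would be strictly larger and the bound below would weaken.

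Finally, any tuple $(\xi_1,\dots,\xi_k)\in \bigoplus_{j=1}^{k}\ell^2(G)(1-p_j)$ satisfies $\xi_j s_j = n_j\xi_j p_j = 0$ and therefore lies in $\ker\partial_2$. Since each $\ell^2(G)(1-p_j)$ is a closed left $\mathcal N(G)$-submodule of $\ell^2(G)$ of von Neumann dimension $1-1/n_j$, this direct sum has dimension $\sum_{j=1}^{k}(1-1/n_j)=k-\sigma(\mathcal P)$. Hence $\dim\ker\partial_2\ge k-\sigma(\mathcal P)$ and $\dim\overline{\mathrm{im}\,\partial_2}\le \sigma(\mathcal P)$, which combined with $\dim\ker\partial_1\ge |X|-1$ gives the advertised bound. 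The only real technical care required is to keep the left/right module conventions straight, so that right multiplication by the projection $1-p_j$ (which commutes with the left $\mathcal N(G)$-action on $\ell^2(G)$ in standard form) yields a left $\mathcal N(G)$-submodule whose von Neumann dimension equals $\tau(1-p_j)$; this is standard for $\mathcal N(G)$ in standard form.
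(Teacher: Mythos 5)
Your argument is correct and complete. Note that the paper itself does not prove this statement: it is quoted verbatim from Peterson--Thom \cite{PT} (Theorem 3.2 there), so there is no in-paper proof to compare against. Your route is the natural homological one: compute $\beta_1^{(2)}$ from the $\ell^2$-chain complex of the presentation $2$-complex, use $\dim\ker\partial_1\ge |X|-1$, and bound $\dim\overline{\mathrm{im}\,\partial_2}$ by exhibiting $\bigoplus_j \ell^2(G)(1-p_j)\subseteq\ker\partial_2$ via the Fox-calculus factorization $\partial(R_j^{n_j})/\partial x=(1+R_j+\cdots+R_j^{n_j-1})\,\partial R_j/\partial x$. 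The proof in \cite{PT} is the cohomological dual of this: it works with the cocycle space $Z^1(G,\ell^2 G)$ (via the identity $\beta_1^{(2)}(G)-\beta_0^{(2)}(G)+1=\dim_G Z^1(G,\ell^2 G)$, which is also the identity this paper invokes in the proof of Theorem \ref{main2}) and shows that each relator $R_j^{n_j}$ cuts the space of admissible values of a cocycle on the generators by at most $1/n_j$ in von Neumann dimension. The decisive algebraic input is identical in both versions: because the order of $R_j$ in $G$ is exactly $n_j$ (irreducibility), the element $s_j=1+R_j+\cdots+R_j^{n_j-1}$ equals $n_j$ times a self-adjoint projection of trace $1/n_j$, and you correctly isolate this as the only place the hypothesis is used. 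Your chain-level version is arguably more elementary in that it avoids the ring of affiliated operators entirely; what \cite{PT}'s cocycle formulation buys in exchange is flexibility for the more general statements proved there (and reused in Section 5 of this paper). Two minor points you handle correctly but that are worth keeping explicit: the reduction $\beta_1^{(2)}(G)=\beta_1^{(2)}(K)$ only needs that cells of dimension $\ge 3$ do not affect $\partial_1$ and $\partial_2$, and the submodule $\ell^2(G)(1-p_j)$ is a \emph{left} $\mathcal{N}(G)$-module because $1-p_j$ acts by right multiplication, with dimension $\tau(1-p_j)$ since the canonical traces on $\mathcal{N}(G)$ and its commutant agree in standard form.
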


\begin{proof}[Proof of Theorem \ref{main1}]
Let us fix $n\in \mathbb N$ and let $X=\{ x_1, \ldots , x_n\} $ and
$$
G_0=\langle X \mid x_1^p=1, \, \ldots , \, x_n^p=1\rangle ,
$$
where $p$ is a prime satisfying $n/p<\e $. In particular, $\beta _1^{(2)} (G_0)> n-1-\e $. We enumerate all pairs of elements $$X\times (G_0\setminus \{1\})=\{ (x_{m_1},g_1),(x_{m_2},g_2), \ldots \} $$ and construct a sequence of groups and epimorphisms $$G_0\stackrel{\alpha_0}\longrightarrow G_1\stackrel{\alpha_1}\longrightarrow \ldots $$ as follows. Below we use the same symbols to denote elements of $G_0$ and their images in $G_1, \ldots $. At step $i$ we assume that the group $G_i$ is already constructed and satisfies the following conditions.
\begin{enumerate}
\item[(a)] $G_i$ is a non-elementary hyperbolic group without finite normal subgroups.
\item[(b)]  $G_i$ has an irreducible torsion presentation $\mathcal P_i$  with $\sigma (\mathcal P_i)<\e $.
\item[(c)] If $i\ge 1$, then for every $j=1, \ldots ,   i$, we either have $g_j=1$ in $G_i$ or $x_{m_j}\in \ll g_j\rr^{G_i}$.
\end{enumerate}

It is straightforward to verify (a)--(c) for $i=0$. The group $G_{i+1}$ is obtained from $G_i$ as follows. If $g_{i+1}=1$ in $G_{i}$, then we set $G_{i+1}=G_i$ and $\alpha _{i}=id$. Otherwise by (a) and Proposition \ref{coset}, there exists a special element $h_i\in x_{m_{i+1}}\ll g_{i+1}\rr^{G_i}$. By Lemma \ref{Eg}, $G_i$ is hyperbolic relative to $H_1=\langle h_i\rangle $. Further by Lemma \ref{special-rh}, there exists an element $t_i\in G_i$ that is special in $G_i$ considered as a group hyperbolic relative to $H_1$. Using Lemma \ref{Eg} again we conclude that $G_i$ is hyperbolic relative to $\{ H_1, H_2\}$, where $H_2=\langle t_i\rangle $.

Let (\ref{irred}) be the irreducible torsion presentation $\mathcal P_i$ of $G_i$ and let $\mathcal A$ be the set of all powers of  elements of $G_i$ represented by $R_1, \ldots , R_k$. Obviously $\mathcal A$ is finite. Let $\mathcal F$ be the finite set given by Theorem \ref{DF} applied to $G_i$ and the collection of parabolic subgroups $\{ H_1, H_2\}$. We choose a prime $q_i>p$ such that the subgroup $N_1=\langle h_i^{q_i}\rangle $ does not contain elements of $\mathcal F$ and $\sigma (\mathcal P_i)+1/q_i<\e $.  Let $G_{i+1}$ be the group given by the presentation $$\mathcal P_{i+1}=\langle \mathcal P_i\mid h_i^{q_i}=1\rangle .$$

By Theorem \ref{DF} applied to the subgroups $N_1\lhd H_1$ and $N_2=\{ 1\}\lhd H_2$, we obtain that $H_1/N_1\cong \mathbb Z/q_i\mathbb Z$ and $H_2$ naturally embed in $G_{i+1}$, and $G_{i+1}$ is hyperbolic relative to $\{ H_1/N_1, H_2\}$. In particular, $G_{i+1}$ is hyperbolic by Lemma \ref{hyp}. If $G_{i+1}$ is elementary, then the infinite cyclic subgroup $H_2$ has finite index in $G_{i+1}$. Hence there is a finite index subgroup $C\le H_2$ which is normal in $G$. In particular, $C^{h_i}\cap C=C$ and hence $h_i\in C\le H_2$ by Lemma \ref{malnorm}. However this is impossible since $h_i$ is nontrivial and has finite order in $G_{i+1}$. Thus $G_{i+1}$ is non-elementary. Further suppose that $K$ is a finite normal subgroup of $G_{i+1}$. Then $K$ is centralized by a finite index subgroup of $G_{i+1}$. In particular, $K$ is centralized by a nontrivial element of $H_2$. Applying Lemma \ref{malnorm} again, we conclude that $K\le H_2$, which implies $K=\{ 1\}$. Thus part (a) of the inductive assumption holds for $G_{i+1}$.

To verify (b) it suffices to note that $\mathcal P_{i+1}$ is an irreducible torsion presentation by Theorem \ref{DF}. Indeed irreducibility is ensured by the choice of the set $\mathcal A$, part (c) of Theorem \ref{DF}, and the fact that $H_1/N_1$ embeds in $G_{i+1}$.

Finally we note that adding relation $g_{i+1}=1$ to $G_{i+1}$ yields $h_i=x_{m_{i+1}}$ and hence $x_{m_{i+1}}^{q_i}=1$ in $G_{i+1}/\ll g_{i+1}\rr^{G_{i+1}}$. However we also have $x_{m_{i+1}}^p=1$ in $G_0$ (and hence in $G_{i+1}/\ll g_{i+1}\rr^{G_{i+1}}$). Since $q_i$ and $p$ are distinct primes, we have $x_{m_{i+1}}=1$ in $G_{i+1}/\ll g_{i+1}\rr^{G_{i+1}}$, i.e., $x_{m_{i+1}}\in \ll g_{i+1}\rr^{G_{i+1}}$. Together with the inductive assumption this implies (c) for $G_{i+1}$. Thus the inductive step is completed.

Let now $Q$ be the limit group. That is, let $Q=G_0/\bigcup\limits_{i=1}^\infty {\rm Ker} (\alpha _i\circ \cdots \circ \alpha _0) $. Note that $Q$ is the limit of the groups $G_i$ in the topology of marked group presentations and hence $$\beta_1^{(2)} (Q)\ge \limsup_{i\to \infty} \beta_1^{(2)} (G_i)\ge n-1-\e .$$ Here the first inequality follows from semicontinuity of the first $\ell^2$-Betti number \cite{Pich} and the second one follows from (b) and Theorem \ref{PT}. Note also that $Q$ is infinite by the following standard argument. If $Q$ was finite, it would be finitely presented and hence all relations of $Q$ would hold in some $G_i$, which contradicts (a). Finally we note that $Q$ is simple. Indeed, let $q\in Q$ be a nontrivial element. Then part (c) of the inductive assumption ensures that $x_1, \ldots , x_n\in \ll q\rr ^Q$. Hence $Q=\ll q\rr ^Q$.
\end{proof}

\begin{rem}\label{hypgroups}
Observe that the hyperbolic groups $G_i$ constructed in the course of proving Theorem \ref{main1} have normal rank $1$ for all $i$ large enough. This provides examples of hyperbolic groups with normal rank $1$ and arbitrary large first $\ell^2$-Betti number. It is not difficult to construct explicit examples of this type. In fact, using similar methods the reader can verify that for every integer $n\ge 2$ and any sufficiently large distinct primes $p_1, \ldots, p_{n}$, the group
$$
H_n=\langle x_1, \ldots, x_n\mid x_1^{p_1}=\ldots = x_n^{p_1}=1, \, (x_1x_2)^{p_2}=\ldots =(x_1x_n)^{p_n}=1\rangle
$$
is hyperbolic, has $\beta_1^{(2)} (H_n)> n-2$, and $\nrk (H_n)=1$ as $H_n=\ll x_1\rr$.
\end{rem}

\section{Normal rank of limits of left orderable amenable groups}

The main goal of this section is to proof Theorem \ref{main2}.

\begin{proof}
The second inequality is obvious. Let us prove the first inequality. In order to estimate the first $\ell^2$-Betti number we use its relation with the Murray-von Neumann dimension of the space of $\ell^2$-cocycles
$Z^1(G,\ell^2 G) := \{ c\colon G \to \ell^2 G \mid c(gh) = \lambda(g) c(h) + c(g) \}$, i.e.\
$$\beta_{1}^{(2)}(G) - \beta_0^{(2)}(G) + 1 = \dim_G Z^1(G,\ell^2 G).$$
This identification was obtained in \cite{PT}. See also $\cite{PT}$ for further information and references concerning the dimension function.

We may assume that $G$ is infinite and hence $\beta_0^{(2)}(G)=0$.
For any finitely generated group $G$, we may consider the first part of the standard resolution:
$$\cdots \to \Z G^{\oplus r} \stackrel{A}{\to} \Z G^{\oplus n} \to \Z G \to \Z \to 0,$$
(with $r \in \N \cup \{\infty\}$) associated with a presentation
$G = \langle h_1,\dots,h_n \mid q_1,q_2,\dots \rangle.$
Note that since the abelianization $G_{ab}$ is finitely generated and hence finitely presented, there exists $r' \in \N$, such that the natural map
$$\langle h_1,\dots,h_n \mid q_1,q_2,\dots,q_{r'} \rangle_{ab} \to G_{ab}$$
is an isomorphism.

Clearly, any $1$-cocycle $c\colon G \to \ell^2 G$ is determined by its values on the generators of $G$, so that we get a natural injective map
$$ {\rm ev} \colon Z^1(G,\ell^2 G) \to \ell^2 G^{\oplus n}.$$
Moreover, a set of values $\xi \in \ell^2 G^{\oplus n}$ lies in the image of this map if and only if $A^* \xi =0$, where $A^* \in M_{r,n} \Z[G]$ is the formal adjoint of the matrix $A$ appearing in the resolution above. If $\xi \in \im ({\rm ev})$, we denote the associated cocycle by $c_{\xi} \colon G \to \ell^2 G$.

Let $k$ be an integer and $g_1,\dots,g_k$ are elements of $G$. Then, we can extend $A^*$ to a matrix $B \in M_{r+k,n} \Z[G]$ by adding the Fox derivatives of $g_1,\dots,g_k$. It is easy to check that $B \xi = 0$ for $\xi \in \ell^2 G^{\oplus n}$ if and only if the associated cocycle $c_{\xi} \colon G \to \ell^2 G$ vanishes on $g_1,\dots,g_k$.

Consider the augmentation homomorphism $\epsilon \colon \Z[G] \to \Z$. The kernel of $\epsilon(A^*) \colon \Z^{\oplus n} \to \Z^{\oplus r}$ consists precisely of the admissable values of a group homomorphism $c \colon G \to \Z$ on the chosen set of generators of $G$. Similarly, the kernel of $\epsilon(B) \colon \Z^{\oplus n} \to \Z^{\oplus r+k}$ describes the set of homomorphisms from $G$ to $\Z$ which vanish on $g_1,\dots,g_k$. Thus, setting $k:=\beta_{1}(G)={\rm rk}_{\Z} H^1(G,\Z) = {\rm rk}_{\Z}\hom(G,\Z)$, we find $g_1,\dots,g_k \in G$, such that $\epsilon(B)$ is injective.

We also consider the restricted matrix $B' \in M_{r'+k,n} \Z[G]$ and note that
$\ker(B' \colon \ell^2 G^{\oplus n} \to \ell^2 G^{\oplus r'+k}) \supset \ker(B \colon \ell^2 G^{\oplus n} \to \ell^2 G^{\oplus r+k})$
whereas
$$\ker(\epsilon(B') \colon \Z^{\oplus n} \to \Z^{\oplus r'+k}) = \ker(\epsilon(B) \colon \Z^{\oplus n} \to \Z^{\oplus r+k}) = \{0\}$$
by our choice of $r'$.

We want to argue that $B' \colon \ell^2 G^{\oplus n} \to \ell^2G^{\oplus r'+k}$ is injective. This will finish the proof. Indeed - assuming injectivity - it is then easy to see that
$$\beta_1^{(2)} (G)+1 = \dim_G Z^1(G,\ell^2 G) = \dim_G \ker(A^*) \leq k.$$
The last inequality follows, since $B'$ defines an injection from $\ker(A^*)$ into $\ell^2 G^{\oplus k}$ and the Murray-von Neumann dimension is monotone.

Assume now that $G$ is as above and let $(G_i)_{i \in \N}$ be a sequence of left-orderable amenable groups which converges to $G$ in the space $\mathcal G_n$ of marked groups.
First of all, we pick a lift of $B'$ in $M_{r'+k,n}\Z[F_n]$ and consider its images $B'_i \in M_{r'+k,n} \Z[G_i]$.

By Morris' result \cite{MR2286034}, every left orderable amenable group is locally indicable. Now, this implies that $B'_i \colon \Z[G_i]^{\oplus n} \to \Z[G_i]^{\oplus r'+n}$ is injective. Indeed, every locally indicable group is conservative \cite[Corollary 4.5]{MR704287} by a result of Gersten, and for conservative groups, injectivity of a matrix $B'_i$ over $\Z[G_i]$ is implied by injectivity of the augmented matrix $\epsilon(B'_i)$. In our case $\epsilon(B'_i)$ is identified with the matrix $\epsilon(B)$, which is injective.

The following proposition is an immediate consequence of Elek's results in \cite{MR1952401}, compare also \cite[Theorem 6.37]{MR1926649}.

\begin{prop}[Elek] Let $\Gamma$ be an amenable group and $A \in M_{p,q} \C[\Gamma]$ be a matrix. If there exists $\xi \in \ell^2 \Gamma^{\oplus q}$, such that $A\xi =0$, then there exists $\xi' \in \C \Gamma^{\oplus q}$, such that $A \xi'=0$.
\end{prop}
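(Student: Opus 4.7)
The plan is to invoke a F\o lner-set computation of the von Neumann dimension, which is the core of Elek's work in \cite{MR1952401}. Fix a F\o lner sequence $(F_n)_{n\in \N}$ in $\Gamma$ and let $S \subseteq \Gamma$ be the finite set of group elements appearing as coefficients of entries of $A$. For each $n$ set $V_n := \C[F_n]^{\oplus q}$, a finite-dimensional subspace of $\C\Gamma^{\oplus q}$, and consider the restriction $A_n \colon V_n \to \C\Gamma^{\oplus p}$; since every entry of $A$ is supported in $S$, the image $A_n(V_n)$ is contained in the finite-dimensional space $\C[F_n \cdot S]^{\oplus p}$.

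The first step is to invoke Elek's main theorem: for amenable $\Gamma$, the limit
$$\rho(A) := \lim_{n \to \infty} \frac{\dim_\C A_n(V_n)}{|F_n|}$$
exists, is independent of the chosen F\o lner sequence, and coincides with the Murray--von Neumann dimension $\dim_\Gamma \overline{A(\ell^2\Gamma^{\oplus q})}$ of the closure of the image of $A$ acting on $\ell^2\Gamma^{\oplus q}$. Combined with the standard additivity identity
$$\dim_\Gamma \ker\bigl(A \colon \ell^2\Gamma^{\oplus q} \to \ell^2\Gamma^{\oplus p}\bigr) + \dim_\Gamma \overline{A(\ell^2\Gamma^{\oplus q})} = q,$$
coming from the polar decomposition of $A$, this gives $\dim_\Gamma \ker(A) = q - \rho(A)$.

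Next, I would argue as follows. Assuming $\xi \ne 0$, the hypothesis $A\xi = 0$ means the $\Gamma$-invariant closed subspace $\ker(A) \subseteq \ell^2\Gamma^{\oplus q}$ is nontrivial. Since every nonzero closed $\Gamma$-invariant subspace of $\ell^2\Gamma^{\oplus q}$ has strictly positive Murray--von Neumann dimension, it follows that $\dim_\Gamma \ker(A) > 0$ and hence $\rho(A) < q$. Therefore, for all sufficiently large $n$, the finite-dimensional linear map $A_n \colon V_n \to \C[F_n \cdot S]^{\oplus p}$ has $\C$-rank strictly less than $\dim_\C V_n = q|F_n|$, so $\ker(A_n) \ne 0$. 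Any nonzero vector $\xi' \in \ker(A_n) \subseteq \C\Gamma^{\oplus q}$ then satisfies $A\xi' = 0$, which is precisely the conclusion of the proposition.

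The step I expect to carry the essential content, and the one that is by far the hardest, is Elek's F\o lner-type formula for $\rho(A)$ together with its identification with the Murray--von Neumann dimension of $\overline{A(\ell^2\Gamma^{\oplus q})}$; this is exactly what is established in \cite{MR1952401} (compare also \cite[Theorem 6.37]{MR1926649}). Once this is available, the remainder of the proof reduces to elementary linear-algebraic bookkeeping with ranks of matrices on finite F\o lner blocks.
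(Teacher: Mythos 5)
Your argument is correct and matches the paper's treatment of this statement: the paper gives no proof, asserting only that the proposition is an immediate consequence of Elek's results in \cite{MR1952401} (compare \cite[Theorem 6.37]{MR1926649}), and your sketch is exactly the intended derivation --- Elek's F\o lner-rank formula identifying $\lim_n \dim_\C A_n(V_n)/|F_n|$ with $\dim_\Gamma \overline{A(\ell^2\Gamma^{\oplus q})}$, combined with rank--nullity for the von Neumann dimension and faithfulness of the trace. The only point left implicit (and it is standard) is that a general amenable group need not be countable, so one should first reduce to the finitely generated subgroup containing the supports of the entries of $A$, where a F\o lner sequence is available.
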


The proposition, applied to $\Gamma=G_i$ and $A=B'_i$, yields that also the induced map on the $\ell^2$-level
$B'_i \colon \ell^2 (G_i)^{\oplus n} \to \ell^2 (G_i)^{\oplus r'+k}$ is injective if and only if
$B'_i \colon \C[G_i]^{\oplus n} \to \C[G_i]^{\oplus r'+k}$ is injective. However, this happens if and only if $B'_i \colon \Z[G_i]^{\oplus n} \to \Z[G_i]^{\oplus r'+k}$ is injective, since $\Z \subset \C$ is a flat ring extension. Combining this with the result above, we conclude that the map
$$B'_i \colon \ell^2 (G_i)^{\oplus n} \to \ell^2 (G_i)^{\oplus r'+k}$$
is injective for all $i \in \N$.

We want to argue that this implies that the map $B' \colon \ell^2 G^{\oplus n} \to \ell^2 G^{\oplus r'+k}$ is injective too. This follows from the following variation of L\"uck's Approximation Theorem.

\begin{lem}
Let $(G_i)_{i \in \N}$ be a sequence of marked sofic groups converging to $G$ in $\mathcal G_n$. Let $A \in M_{p,q}(\Z[F_n])$ be a matrix. Then,
$$\lim_{i \to \infty} \dim_{G_i} \ker(A_i \colon \ell^2 G_i^{\oplus q} \to \ell^2 G_i^{\oplus p}) = \dim_G \ker(A' \colon \ell^2 G^{\oplus q} \to \ell^2 G^{\oplus p}),$$
where $A'$ denotes the image of $A$ in $M_{p,q}(\Z[G])$, and $A_i$ denotes the corresponding image in $M_{p,q}(\Z[G_i])$.
\end{lem}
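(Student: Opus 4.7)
The plan is to identify both kernel dimensions with the mass at $0$ of spectral measures of $A_i^\ast A_i$ and $(A')^\ast A'$, establish weak convergence of those spectral measures from the marked convergence $(G_i,S_i) \to (G,S)$, and then use the integrality of the entries of $A$ to control the atom at $0$ in the weak limit. In more detail, for each $i$ let $\mu_i$ denote the spectral measure of the positive operator $A_i^\ast A_i \in M_q(\mathcal N(G_i))$ with respect to the (unnormalized) trace $\mathrm{Tr}_{G_i}$, and let $\mu$ be defined analogously for $(A')^\ast A'$ using $\mathrm{Tr}_G$. By functional calculus one has $\dim_{G_i} \ker(A_i) = \mu_i(\{0\})$ and $\dim_G \ker(A') = \mu(\{0\})$, reducing the lemma to the assertion $\mu_i(\{0\}) \to \mu(\{0\})$.

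For the first half of the argument I would verify convergence of moments. For fixed $k \in \N$ the quantity $\int x^k\,d\mu_i(x) = \mathrm{Tr}_{G_i}\bigl((A_i^\ast A_i)^k\bigr)$ expands, once we pick the lift $A \in M_{p,q}(\Z[F_n])$, into a finite $\Z$-linear combination of indicator values of the form $[w =_{G_i} 1]$ where $w$ ranges over a finite set of words in $F_n$ that depends only on $A$ and $k$, not on $i$. Marked convergence in $\mathcal G_n$ is precisely the statement that for any such fixed $w$ the equalities $[w =_{G_i} 1]$ stabilize to $[w =_G 1]$ for $i$ large. Hence $\int x^k\,d\mu_i \to \int x^k\,d\mu$ for every $k$. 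Since every $\mu_i$ and $\mu$ is supported in the common compact interval $[0,C]$, where $C$ is the square of the $\ell^1$-norm of the entries of $A$, moment convergence upgrades to weak convergence $\mu_i \to \mu$.

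The Portmanteau theorem applied to the closed set $\{0\}$ gives $\mu(\{0\}) \geq \limsup_i \mu_i(\{0\})$. The opposite inequality is the genuinely non-trivial content of any L\"uck-type approximation theorem, and here I would invoke the sofic version due to Elek--Szabo and the second author, which is available because every $G_i$ is assumed sofic. The key input is the integrality of the entries of $A$: approximating the operators $A_i$ by finite integer matrices via the sofic embedding of $G_i$, the non-zero determinants are integers of absolute value at least $1$, yielding a uniform Fuglede--Kadison bound
\[
\int_{(0,C]} \log x \, d\mu_i(x) \geq -C',
\]
with $C'$ depending only on $A$. This uniform log-integrability precludes the spontaneous accumulation of Dirac mass at $0$ in the weak limit and therefore gives $\liminf_i \mu_i(\{0\}) \geq \mu(\{0\})$, completing the proof. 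The main obstacle is exactly this last step: weak convergence of probability measures on $[0,C]$ only provides upper semicontinuity of atoms, so the argument genuinely requires the integrality of $\Z[F_n]$-matrix entries together with a soficity-based log-determinant estimate to rule out mass-collapse at $0$.
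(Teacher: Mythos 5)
Your argument is correct and follows essentially the same route as the paper: spectral measures of $A_i^\ast A_i$, convergence of moments from marked convergence, weak convergence via uniform compact support, upper semicontinuity of the atom at $0$ in one direction, and a soficity-plus-integrality estimate in the other. The only cosmetic difference is that you package the sofic input as a uniform lower bound on $\int\log x\,d\mu_i$ (a Fuglede--Kadison determinant bound), whereas the paper states the equivalent eigenvalue-counting estimate $\mu_{A_i}([0,\varepsilon))\leq\mu_{A_i}(\{0\})+C\,|\log\varepsilon|^{-1}$ directly; both come from the same approximation of $A_i$ by integer matrices.
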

\begin{proof}[Sketch of proof:]
First of all, upon replacing $A$ by $A^*A$, we may assume that $A$ is positive. For simplicity, let us assume that $A \in \Z[F_n]$. Denote the canonical trace on $\Z[G]$ by $\tau$ and the traces on $\Z[G_i]$ by $\tau_i$. To $A'$, we can associate a spectral measure $\mu_{A'}$ on $[0,\infty)$, which satisfies
$$\tau(A'^n) = \int_{\R} t^n d \mu_{A'}(t)$$
and define $\mu_{A_i}$ similarly using the traces $\tau_i$.
It is a standard fact that $\mu_{A_i}(\{0\}) = \dim_{G_i} \ker(A_i \colon \ell^2 G_i \to \ell^2 G_i)$. Moreover, since $(G_i)_{i \in \N}$ converges to $G$ it is easy to see that the spectral measures of $A_i$ converge in moments to the spectral measure of $A'$, i.e.\ $\int_{\R} t^n d \mu_{A_i}(t) \to \int_{\R} t^n d \mu_{A'}(t)$, for all $n \in \N$. Since the support of $\mu_{A_i}$ is bounded by $\|A\|_1$, this implies that $\mu_{A_i} \to \mu_{A'}$ weakly. We can conclude that
$\mu_{A'}(\{0\}) \geq \limsup_{i \to \infty} \mu_{A_i}(\{0\}).$

Now, since all $G_i$ are sofic, there is a universal constant $C$, only depending on $A \in \Z[F_n]$, such that
$$\mu_{A_i}([0, \varepsilon)) \leq \mu_{A_i}(\{0\}) + C \cdot |\log(\varepsilon) |^{-1}.$$
This follows by approximating the spectral measure of $A_i$ by spectral measures of integer matrices, and the corresponding fact for eigenvalue distributions of integer matrices, see \cite{sofapp}. We conclude
$$\mu_{A'}(\{0\}) \leq  \mu_{A'}([0, \varepsilon)) \leq \liminf_{i \to \infty}\mu_{A_i}([0, \varepsilon)) \leq \liminf_{i \to \infty} \mu_{A_i}(\{0\}) + C \cdot |\log(\varepsilon) |^{-1}.$$
Since this holds for all $\varepsilon>0$, we get $\lim_{i \to \infty} \mu_{A_i}(\{0\}) = \mu_{A'}(\{0\})$ as required. This finishes the proof. \end{proof}

Note that amenable groups are sofic, and we can conclude from the lemma and the vanishing of the kernels of $B'_i \colon \ell^2 (G_i)^{\oplus n} \to \ell^2 (G_i)^{\oplus r'+k}$ that the kernel of $B' \colon \ell^2 G^{\oplus n} \to \ell^2 G^{\oplus r'+k}$ is trivial. As we argued before, this shows
$\beta_1^{(2)}(G) \leq \beta_{1}(G)-1$
and finishes the proof.
\end{proof}

\begin{rem}\label{rem}
In general, even among left orderable groups, one cannot expect that
$$\beta_1^{(2)}(G) \leq \beta_1(G) -1$$
will always hold.
Indeed, Kropholler and Thurston \cite[Section 6]{berg} pointed out an explicit example of a finitely generated, left-orderable perfect group
$$G =\langle x, y, z \mid x^2=y^3=z^7=xyz \rangle$$
with vanishing first $\ell^2$-Betti number.
Clearly, such a group cannot be locally indicable. The group $G$ is the fundamental group of a Seifert fiber space, i.e.\ it is the fundamental group of a $S^1$-bundle over some $2$-dimensional orbifold.
More specifically, removing $x=yz$ from the generating set, we get: $$G= \langle y,z \mid zyzy^{-2}, yzyz^{-6} \rangle.$$ It is easily seen from this representation that $G$ is perfect. Indeed, the first differential in the chain complex which computes $H_*(G,\Z)$ is built from the total degrees of the variables $y$ and $z$ in the relations, and hence looks like $$d_1= \left( \begin{array}{cc} -1 & 2 \\  2 & -5\end{array} \right).$$
Since $\det(d_1) = 5 -4 =1$, we conclude that $H_1(G,\Z)= G/[G,G]=0$.

Moreover, $\beta_1^{(2)}(G)=0$, since $G$ has an infinite amenable normal subgroup. Taking free products of $G$ with itself, we can make the difference $\beta_1^{(2)}(G) - \beta_1(G)$ as large as we wish.
\end{rem}

\section{Some further questions}

Even in the presence of torsion it seemed hard to find examples of groups $G$, where adding a single relation forces the first $\ell^2$-Betti number to drop by large amounts. It was asked explicitly by Gromov in Section 8.A$_4$ of \cite{asy}, whether it can drop by more than one. There are easy counterexamples to this question like $PSL(2,\Z) \ast \Z = \langle a,b,c \mid a^2 = b^3 = 1 \rangle$, where the relation $ab$ forces a drop of $7/6$. However, we are not aware of a classical construction of groups were one additional relation forces a drop of the first $\ell^2$-Betti number by two or more. Theorem \ref{main1} gives a negative answer to Gromov's question in a very strong sense. Indeed, adding any non-trivial relation to a simple group forces a drop of the first $\ell^2$-Betti number to zero. Again, since existence of coprime torsion is essential to the construction, we conjecture:

\begin{conj} \label{conj2}
Let $G$ be a torsion free discrete group (or a $p$-group for a prime $p$) and let $g_1,\dots,g_n$ be arbitrary elements of $G$. Then $$\beta_1^{(2)}(G/ \! \ll g_1,\dots,g_n \rr) \geq \beta_1^{(2)}(G)-n.$$
\end{conj}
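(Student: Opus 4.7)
The plan is to reduce Conjecture~\ref{conj2} to a single kernel-comparison inequality, in the spirit of Peterson--Thom and the proof of Theorem~\ref{main2}. Set $Q := G/\ll g_1,\dots,g_n\rr$, and assume $G$ (hence $Q$) is infinite, the finite cases being trivial. Using the identity $\beta_1^{(2)}(G) + 1 = \dim_G Z^1(G,\ell^2 G)$ together with the $\ell^2$-cocycle setup of the proof of Theorem~\ref{main2}, fix a presentation $G = \langle h_1,\dots,h_d \mid q_1,q_2,\dots\rangle$ and identify $Z^1(G,\ell^2 G) \cong \ker(A \colon \ell^2 G^{\oplus d} \to \ell^2 G^{\oplus r})$, where $A$ is (the formal adjoint of) the Fox--Jacobian. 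Enlarging the presentation by $g_1,\dots,g_n$ yields the Jacobian of $Q$ in block form $B = \begin{pmatrix} \bar A \\ C \end{pmatrix}$, where $\bar A$ is the image of $A$ in $M(\Z Q)$ and $C \in M_{n,d}(\Z Q)$ collects the Fox derivatives of $g_1,\dots,g_n$.

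Since $\ker(B) = \ker(\bar A) \cap \ker(C)$ inside $\ell^2 Q^{\oplus d}$ and $\dim_Q \ker(C) \geq d - n$, the additivity formula for Murray--von Neumann dimension gives
$$
\dim_Q \ker(B) \;\geq\; \dim_Q \ker(\bar A) - n.
$$
Combined with $\beta_1^{(2)}(Q) + 1 = \dim_Q \ker(B)$, this reduces Conjecture~\ref{conj2} to the single inequality
$$
(\star) \qquad \dim_Q \ker(\bar A) \;\geq\; \dim_G \ker(A).
$$

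To attack $(\star)$, the principal approach is via the strong Atiyah conjecture: for torsion-free $G$ and $Q$ satisfying it, both $\mathcal L G$ and $\mathcal L Q$ embed in their Linnell skew fields $\mathcal D(G)$ and $\mathcal D(Q)$, and $(\star)$ becomes a Sylvester-rank comparison for $A$ under the ring map $\Z G \to \Z Q$, which can be attempted through flatness on the skew-field level. For $p$-groups one runs the analogous argument with the $\mathbb F_p$-Sylvester rank function. A complementary, Atiyah-free route is by approximation: if the normal closure $N = \ll g_1,\dots,g_n\rr$ is exhausted by a nested chain of sofic normal subgroups of $G$, one iterates the spectral-measure argument of the final lemma in Section~4, using the soficity bound $\mu_{A_i}([0,\varepsilon)) \leq \mu_{A_i}(\{0\}) + C|\log\varepsilon|^{-1}$, to transfer kernel masses from $\mathcal L G$ to $\mathcal L Q$ in the limit. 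The role of the torsion-free (or $p$-group) hypothesis is precisely to block the construction of Theorem~\ref{main1}: there the atoms in spectral measures produced by coprime torsion are the very mechanism that lets a single relation collapse $\beta_1^{(2)}$ arbitrarily.

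The main obstacle is $(\star)$ in the stated generality. Without either an Atiyah-style structural hypothesis on $\mathcal L Q$ or an approximation hypothesis on $N$, there is no available technique to rule out spurious $\ell^2$-kernel elements in $\ker(\bar A)$ that were absent in $\ker(A)$, and the naive attempts via moment comparison or direct cocycle manipulation fail. The quantitative control needed appears to be essentially of strong-Atiyah strength, so I expect a full proof of Conjecture~\ref{conj2} to require a genuinely new ingredient beyond the tools assembled for Theorems~\ref{main1} and~\ref{main2}.
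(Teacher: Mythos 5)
The statement you were asked to prove is Conjecture \ref{conj2}; the paper does not prove it and offers no argument for it beyond motivation, so there is no proof of record to compare against. Your proposal, to its credit, does not claim a proof either: it packages the difficulty into the single inequality $(\star)$, namely $\dim_Q\ker(\bar A)\ge\dim_G\ker(A)$ for the Fox--Jacobian under the quotient map $\Z G\to\Z Q$, and correctly identifies that closing $(\star)$ in the stated generality would need input of strong-Atiyah or Lück-approximation strength that is not available. The reduction itself is sound for infinite $Q$: the bound $\dim_Q\ker(B)\ge\dim_Q\ker(\bar A)+\dim_Q\ker(C)-d\ge\dim_Q\ker(\bar A)-n$ is a correct use of the additivity of Murray--von Neumann dimension, and the identification $\beta_1^{(2)}(Q)+1=\dim_Q\ker(B)$ matches the Peterson--Thom setup used in the proof of Theorem \ref{main2}. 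Your diagnosis is also consistent with the paper's own discussion in Section 6: a proof of Conjecture \ref{conj2} would settle the Kervaire conjecture for torsion-free groups, Wiegold's problem for torsion-free perfect groups, and cases of the Whitehead asphericity conjecture, which is exactly why no proof should be expected from the tools of Theorems \ref{main1} and \ref{main2}.

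One genuine error in your write-up: the claim that ``the finite cases are trivial'' is false. If $Q=G/\ll g_1,\dots,g_n\rr$ is finite, then $\beta_1^{(2)}(Q)=0$ and the conjecture asserts $\beta_1^{(2)}(G)\le n$ for a torsion-free group normally generated (up to finite index) by $n$ elements; for $Q$ trivial this is precisely a weak form of Conjecture \ref{conj1}, which is itself open. So the case you set aside as trivial already contains the heart of the problem, and your reduction to $(\star)$ only covers the infinite-quotient case. Apart from that, your assessment that the conjecture remains open and requires a new ingredient is accurate.
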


We want to note that a positive solution to Conjecture \ref{conj1} or Conjecture \ref{conj2} would have immediate applications to longstanding problems. For instance, an old problem of Wiegold asks whether every finitely generated perfect group has normal rank equal to one. The answer is positive for finite groups, but the question is wide open in the general case.  For any torsion free perfect group $G$, the group $G \ast G$ would yield a counterexample if either of our conjectures held.

Another well-known open problem is the Kervaire Conjecture, which can be restated as follows: For every nontrivial group $G$, the normal rank of $G\ast \mathbb Z$ is at least $2$. A positive answer is known for sofic groups, torsion free-groups, and some other particular classes. Clearly Conjecture \ref{conj1} or Conjecture \ref{conj2} would yield an alternative proof of the Kervaire conjecture for torsion free groups as $\beta_1^{(2)}(G\ast  \Z)>0$ for any infinite group $G$.

Yet another example is the Whitehead Asphericity Conjecture asserting that any subcomplex of a two-dimensional aspherical CW-complex is aspherical. An important case is given by $X \subset X \cup_f D^2$ for some $f \colon S^1 \to X$, with $X \cup_f D^2$ two-dimensional and contractible. While it is easy to see that $\nrk(\pi_1(X))=1$, Berrick and Hillman \cite{berhil} showed that $\beta^{(2)}_1(\pi_1(X))=0$ implies that $X$ is aspherical.

These examples obviously show that proving either of our conjectures in a ``nontrivial situation" should not be easy. In fact, the case of groups as in Theorem \ref{main2} can be thought of as ``trivial", although the proof is not elementary. Indeed the inequality (\ref{naive}) is derived from a stronger inequality $\beta_{1}^{(2)}(G) \leq \beta_1(G)-1$. This means that our theorem does not have any nontrivial applications to either of the above mentioned open problems.

Finally we remark that there are many other group invariants closely related to the first $\ell^2$-Betti number (e.g., cost, rank gradient, $p$-virtual deficiency, just to name a few). We refer to \cite{gaboriau,Lac,LO} for more details. To the best of our knowledge, suitable versions of Conjectures \ref{conj1} and \ref{conj2} remain open for these invariants.

\end{document}